\newtheorem{lemma}{Lemma}
\newtheorem{theorem}{Theorem}
\newtheorem{corollary}{Corollary}
\newcommand {\E} {\mathbb{E}}
\newcommand {\V} {\mathbb{V}}
\newcommand {\p} {\mathbb{P}}
\newcommand {\N} {\mathbb{N}}
\newcommand {\R} {\mathbb{R}}
\newcommand {\ve} {\varepsilon}
\def\blfootnote{\xdef\@thefnmark{}\@footnotetext}\makeatother
\title[The LIL for trigonometric series with bounded gaps]{\bf On the law of the iterated logarithm for trigonometric series with bounded gaps II}
\author{Christoph Aistleitner} 
\address{Department of Mathematics, Graduate School of Science, Kobe University, Kobe 657-8501, Japan}
\email{aistleitner@math.tugraz.at}
\author{Katusi Fukuyama}
\address{Department of Mathematics, Graduate School of Science, Kobe University, Kobe 657-8501, Japan}
\email{fukuyama@math.kobe-u.ac.jp}
\thanks{The first author is supported by a Schr\"odinger scholarship of the Austrian Research
Foundation (FWF). The second author is supported by KAKENHI 24340017 and 24340020.}
\subjclass[2010]{60F15, 11K38 42A32}
\begin{document}

\begin{abstract}
It is well-known that for a quickly increasing sequence $(n_k)_{k \geq 1}$ the functions $(\cos 2 \pi n_k x)_{k \geq 1}$ show a behavior which is typical for sequences of independent random variables. If the growth condition on $(n_k)_{k \geq 1}$ is relaxed then this almost-independent behavior generally fails. Still, probabilistic constructions show that for \emph{some} very slowly increasing sequences $(n_k)_{k \geq 1}$ this almost-independence property is preserved. For example, there exists $(n_k)_{k \geq 1}$ having bounded gaps such that the normalized sums $\sum \cos 2 \pi n_k x$ satisfy the central limit theorem (CLT). However, due to a ``loss of mass'' phenomenon the variance in the CLT for a sequence with bounded gaps is always smaller than $1/2$. In the case of the law of the iterated logarithm (LIL) the situation is different; as we proved in an earlier paper, there exists $(n_k)_{k \geq 1}$ with bounded gaps such that
$$
\limsup_{N \to \infty} \frac{\left| \sum_{k=1}^N \cos 2 \pi n_k x \right|}{\sqrt{N \log \log N}} = \infty \qquad \textup{for almost all $x$.}
$$
In the present paper we prove a complementary results showing that any prescribed limsup-behavior in the LIL is possible for sequences with bounded gaps. More precisely, we show that for any real number $\Lambda \geq 0$ there exists a sequence of integers $(n_k)_{k \geq 1}$ satisfying $n_{k+1} - n_{k} \in \{1,2\}$ such that the limsup in the LIL equals $\Lambda$ for almost all $x$. Similar results are proved for sums $\sum f(n_k x)$ and for the discrepancy of $(\langle n_k x \rangle)_{k \geq 1}$.
\end{abstract}

\date{}
\maketitle

\section{Introduction and statement of results}

It is well-known that for any quickly growing sequence of positive integers $(n_k)_{k \geq 1}$ the systems $(\cos 2 \pi n_k x)_{k \geq 1}$ and $(\sin 2 \pi n_k x)_{k \geq 1}$ exhibit many properties which are typical for sequences of \emph{independent} random variables. This similarity includes the analogs of the Kolmogorov three-series theorem (Kolmogorov, 1924), the central limit theorem (Salem--Zygmund, 1947) 				e law of the iterated logarithm (Erd\H os--G\'al, 1955), all of which hold if $(n_k)_{k \geq 1}$ satisfies the Hadamard gap condition
\begin{equation} \label{hadamard}
\frac{n_{k+1}}{n_k} \geq q > 1, \qquad k \geq 1.
\end{equation}
Similar results hold if the simple functions $\cos 2 \pi \cdot$ and $\sin 2 \pi \cdot$ are replaced by more general 1-periodic functions, satisfying some regularity conditions. Philipp~\cite{plt} proved that even an analog of the Chung--Smirnov law of the iterated logarithm holds: for any sequence $(n_k)_{k \geq 1}$ satisfying~\eqref{hadamard} we have
$$
\frac{1}{4} \leq \limsup_{N \to \infty} \frac{N D_N^*(\langle n_1 x \rangle, \dots, \langle n_N x\rangle)}{\sqrt{N \log \log N}} \leq C_q \qquad \textup{for almost all $x$.}
$$
Here $\langle \cdot \rangle$ stands for the fractional part of a real number, and 
$$
D_N^*(y_1, \dots, y_N) = \sup_{0 \leq a \leq 1} \left| \frac{1}{N} \sum_{k=1}^N\mathds{1}_{[0,a]} (y_k) - a \right|
$$
denotes the so-called \emph{star-discrepancy} of a set $y_1, \dots, y_N$ of points from the unit interval. In probabilistic terminology, the star-discrepancy is a version of the Kolmogorov--Smirnov statistic, adjusted to the uniform distribution on $[0,1]$, and usually applied to deterministic sequences. The notion of the star-discrepancy is closely related to the theory of uniform distribution modulo one, which is a branch of number theory that originated in the work of Borel, Weyl and others in the early 20th century. Classical survey papers on lacunary trigonometric series and their almost-independent behavior are for example~\cite{gapo,kac,kahane}; more recent survey papers are~\cite{ab,berkes2}. An introduction to uniform distribution modulo one and discrepancy theory can be found for example in the monographs~\cite{dts,knu}.\\

In some cases the gap condition~\eqref{hadamard} may be slightly weakened, but in general the almost-independent behavior of $(f(n_k x))_{k \geq 1}$ for 1-periodic $f$ breaks down without a strong growth condition on $(n_k)_{k \geq 1}$. However, this is only one part of the truth: while the probabilistic limit theorems fail to hold for \emph{all} sequences $(n_k)_{k \geq 1}$ without strong growth conditions, these limit theorems still remain true for \emph{some} slowly growing sequences $(n_k)_{k \geq 1}$, or actually for all ``typical'' slowly growing sequences in a suitable probabilistic model. Of fundamental importance is a paper of Salem and Zygmund~\cite{salemzyg}. Amongst other things, they proved the following: Let $(\xi_k)_{k \geq 1}$ be a sequence of independent, fair $\{-1,1\}$-valued random variables. Then the systems $(\xi_k \cos 2 \pi k x)_{k \geq 1}$ and $(\xi_k \sin 2 \pi k x)_{k \geq 1}$ satisfy the CLT and LIL for \emph{almost all} realizations of $(\xi_k)_{k \geq 1}$. Since for almost all $x$ we have
\begin{equation} \label{sumbounded}
\sum_{k=1}^N \cos 2\pi k x = \mathcal{O}(1) \qquad \textrm{and} \qquad \sum_{k=1}^N \sin 2\pi k x = \mathcal{O}(1) \qquad \textrm{as $N \to \infty$,}
\end{equation}
the same conclusion holds if we assume that the $\xi_k$'s are $\{0,1\}$-valued instead of $\{-1,1\}$-valued. Then we can define a (random) sequence $(n_k)_{k \geq 1}$ as the sequence containing all the numbers $\{k \geq 1:~\xi_k = 1\}$, and conclude that the CLT and LIL for $(\cos 2 \pi n_k x)_{k \geq 1}$ and $(\sin 2 \pi n_k x)_{k \geq 1}$ hold for almost all sequences $(n_k)_{k \geq 1}$.\\

Note that a sequence $(n_k)_{k \geq 1}$ constructed in this randomized way typically grows very slowly; by the strong law of large numbers we have $n_k / k \to 2$ almost surely, which means linear growth (in contrast to~\eqref{hadamard}, which implies exponential growth). However, linear growth does not necessarily imply that the gaps $n_{k+1} - n_k$ are small for all $k$. Actually, by the Erd\H os-R\'enyi ``pure heads'' theorem a random sequence $(n_k)_{k \geq 1}$ constructed in the described manner has infinitely many gaps $n_{k+1}-n_k$ of size roughly $\log k$, almost surely. Berkes~\cite{berkes} proved that slower growth is possible: for any function $h(k) \to \infty$ there exists a sequence $(n_k)_{k \geq 1}$ for which $1 \leq n_{k+1} - n_k \leq h(k)$ such that $(\cos 2 \pi n_k x)_{k \geq 1}$ satisfies the CLT. In a sense, Berkes' theorem is optimal: by a result of Bobkov and G\"otze~\cite{bobkov}, for $(n_k)_{k \geq 1}$ having bounded gaps, that is, satisfying $1 \leq n_{k+1} - n_k \leq K$ for some constant $K$, the CLT for $(\cos 2 \pi n_k x)_{k \geq 1}$ may hold, but due to a ``loss of mass'' phenomenon only with a limiting variance smaller than 1/2 (which would be the ``correct'' variance, corresponding to the independent case). For any variance less than $1/2$ there actually exist appropriate sequences having bounded gaps and satisfying the CLT; see~\cite{fuku2,fuku3}.\\

One could suspect that a similar ``loss of mass'' phenomenon should also appear in case of the LIL; namely, that for any sequence $(n_k)_{k \geq 1}$ satisfying $1 \leq n_{k+1} - n_k \leq K$ for some $K$ the limsup in the LIL
$$
\limsup_{N \to \infty} \frac{\left| \sum_{k=1}^N \cos 2 \pi n_k x \right|}{\sqrt{2 N \log \log N}} 
$$
should be less than $1/\sqrt{2}$ for almost all $x$ (which would be the ``correct'' value for independent random variables). However, surprisingly, the contrary is true. In a previous paper we proved the existence of a sequence $(n_k)_{k \geq 1}$ satisfying the strongest possible ```bounded gap'' condition $n_{k+1} - n_k \in \{1,2\},~k \geq 1,$ such that
\begin{equation} \label{largelimsup}
\limsup_{N \to \infty} \frac{\left| \sum_{k=1}^N \cos 2 \pi n_k x \right|}{\sqrt{N \log \log N}} = \infty \qquad \textup{for almost all $x$.}
\end{equation}
Berkes asked\footnote{At the 8th World Congress on Probability and Statistics in Istanbul, July 2012.} whether \emph{any} prescribed value for the limsup in the LIL is possible for sequences with bounded gaps, and if such results can be generalized to a larger class of $1$-periodic functions $f$ and to the discrepancy $D_N^*$. In both cases the answer is affirmative, as we want to show in the present paper.\\

Throughout this paper, $\| \cdot \|$ denotes the $L^2(0,1)$ norm, and $\| \cdot \|_\infty$ denotes the $L^\infty$ norm. As already mentioned, $\langle \cdot \rangle$ denotes the fractional part of a real number. We will write $\mu$ for the Lebesgue measure. Furthermore, we will write $\mathds{1}_A$ for the indicator function of an interval $A \subset [0,1]$, and $\mathbf{I}_A$ for the indicator function of $A$, centered and extended with period one; that is,
\begin{equation} \label{ia}
\mathbf{I}_A(x) = \mathds{1}_A(\langle x \rangle) - \mu(A).
\end{equation}

\begin{theorem} \label{th1}
For any given real number $\Lambda \geq 0$ and any function $f$ satisfying
\begin{equation} \label{f}
f(x+1)=f(x), \qquad \int_0^1 f(x) ~dx = 0, \qquad \textup{Var}_{[0,1]} f < \infty
\end{equation}
there exists a sequence of positive integers $(n_k)_{k \geq 1}$ satisfying
$$
n_{k+1} - n_k \in \{1,2\}
$$
such that we have
\begin{equation} \label{th1concl}
\limsup_{N \to \infty} \frac{\left| \sum_{k=1}^N f(n_k x) \right|}{\sqrt{N \log \log N}} = \Lambda \|f\| \qquad \textrm{for almost all $x$}. 
\end{equation}
If $\mathcal{F}$ is a countable class of functions satisfying~\eqref{f}, then there is a sequence $(n_k)_{k \geq 1}$ such that~\eqref{th1concl} holds for \emph{all} functions $f$ in $\mathcal{F}$.
\end{theorem}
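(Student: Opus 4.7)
The plan is a block construction in which the sequence $(n_k)$ alternates between \emph{booster} blocks that push the partial sum $S_N = \sum_{k=1}^N f(n_k x)$ up to the prescribed LIL level, and long \emph{quiet} blocks in which every gap $n_{k+1}-n_k$ equals $1$. The quiet blocks contribute negligibly: for any $f$ satisfying~\eqref{f}, Koksma's inequality applied to $\sum_{k=1}^K f((n_0+k)x)$, combined with the logarithmic discrepancy of the Kronecker sequence $(kx)_{k \geq 1}$ for almost every $x$, gives a contribution of order $O(\log K)$, which is $o(\sqrt{N \log \log N})$.

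The booster blocks are obtained by adapting the construction from our earlier paper that produced~\eqref{largelimsup}. That construction designs gap patterns in $\{1,2\}$ so that, on a block of length $L$, the sum has magnitude essentially $c \sqrt{L \log \log L}$, where $c$ can be prescribed by tuning the block parameters; the previous paper needed $c \to \infty$, but the construction is naturally calibratable, and setting $c = \Lambda \|f\|$ is the variant I would use. I would place the $j$th booster block ending at position $N_j$, with $N_j \to \infty$ rapidly (say $N_{j+1} \geq 2^{N_j}$), and choose the block length $L_j$ so that at time $N_j$ the block contribution is essentially $\Lambda \|f\|\sqrt{N_j \log \log N_j}$ with high probability.

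The lower bound $\limsup |S_N|/\sqrt{N \log \log N} \geq \Lambda \|f\|$ then follows from a Borel--Cantelli argument over the booster sub-sums, using the sufficient separation of blocks to treat them as essentially independent events. The upper bound, which I expect to be the main obstacle, requires controlling $|S_N|$ at \emph{every} intermediate $N$: in a quiet stretch between $N_j$ and $N_{j+1}$ one has $S_N = S_{N_j} + O(\log N)$, and the normalization $\sqrt{N \log \log N}$ grows enough along the stretch to absorb the booster contribution, so the ratio decreases from $\approx \Lambda \|f\|$ at $N_j$ to $o(1)$ before the next booster fires; within a booster block a maximal inequality tailored to the construction must be invoked to guarantee $|S_N| \leq (\Lambda + \varepsilon)\|f\|\sqrt{N \log \log N}$ throughout. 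This uniform control is the delicate analytic point of the argument.

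For a general $f$ of bounded variation I would truncate the Fourier series $f = P_M + R_M$ with $\|R_M\| \to 0$ as $M \to \infty$: the polynomial part is handled frequency-by-frequency by the booster construction, and the tail contribution to $S_N$ is $o(\sqrt{N \log \log N})$ by Koksma-type variance estimates together with the decay $O(1/j)$ of the Fourier coefficients coming from bounded variation. For the countable family $\mathcal{F} = \{f_i\}$ we proceed diagonally: in the $j$th booster block, target simultaneously the functions $f_1, \dots, f_j$, so that one and the same gap pattern realizes the limsup $\Lambda \|f\|$ for every $f \in \mathcal{F}$. The additional technical issue here is engineering one pattern that is uniformly near-optimal across the countable class, which reduces to a uniform version of the booster and maximal estimates used in the single-function case.
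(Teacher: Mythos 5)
Your architecture---sparse ``booster'' blocks separated by quiet stretches of consecutive integers, with the limsup attained only along the booster endpoints $N_j$---has a quantitative flaw that breaks the lower bound. For a block of length $L_j$ built from a function $f$ with \eqref{f}, the block sum has $L^2(dx)$ norm of order $\sqrt{L_j}$ (and, over any reasonable randomization, variance of order $L_j$ per fixed $x$). Hence the event that a \emph{single} block reaches level $c\sqrt{L_j\log\log L_j}$ has measure (or probability) decaying like a power of $(\log L_j)^{-1}$; it is a moderate-deviation event, never a ``high probability'' one, and it is impossible to ``calibrate'' one block so that its sum equals $c\sqrt{L_j \log\log L_j}$ for a.e.\ $x$. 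Consequently the limsup can only be produced by a second Borel--Cantelli argument over infinitely many blocks, which requires $\sum_j (\log L_j)^{-\gamma}=\infty$ for the relevant $\gamma>0$. With your spacing $N_{j+1}\geq 2^{N_j}$ one has $\log\log N_{j+1}\gtrsim \log N_j$, so these probabilities are $\lesssim N_{j-1}^{-\gamma}$ and the series converges: almost surely only finitely many boosters fire at the required level, and the limsup falls strictly below $\Lambda\|f\|$. This is why the paper's blocks $(\Psi(r-1),\Psi(r)]$ grow like $r^r$ --- fast enough that $\Psi(r-1)=o(\Psi(r))$ decouples consecutive blocks, but slow enough that the tail probabilities $\tilde p_r\asymp r^{-(1-\ve)}$ still diverge in sum (see \eqref{prtilde}--\eqref{sumardiv}).

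A second missing idea is the mechanism that makes \emph{arbitrarily large} $\Lambda$ reachable despite the Bobkov--G\"otze loss-of-mass bound: you cannot simply ``tune'' the construction behind \eqref{largelimsup} to a prescribed constant, because for a bounded-gap sequence the per-block variance at a generic $x$ is too small. The paper inflates the variance by a factor $\lambda^2$ (rather than $\lambda$) only on the resonance sets $A_r$ of $x$ close to rationals with denominator $r$, where $(\sin\lambda\pi rjx/\sin\pi rjx)^2\approx\lambda^2$; the equidistribution theorem (Lemma~\ref{lemmaunif}) guarantees that a.e.\ $x$ lies in $A_r$ for a positive-density set of $r$, and Lemma~\ref{lemmadensity} then keeps the Borel--Cantelli series divergent along those $r$. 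Your proposal never confronts the $x$-dependence of the block variance, which is also why it cannot deliver an $x$-independent constant $\Lambda\|f\|$ in \eqref{th1concl}. (By contrast, your assessment that the upper bound is the delicate point is inverted relative to the actual difficulty: with the randomized model the upper bound follows from Kolmogorov's LIL plus the variance bound \eqref{sumvxk} and Fubini; the exact lower constant is where the work is. Your treatment of the quiet stretches, the decomposition $f=g+h$ with $\|h\|$ small, and the countability reduction are all consistent with the paper.)
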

Note that Theorem~\ref{th1} can be in particular applied to the functions $\cos 2 \pi x$ and $\sin 2 \pi x$, and shows that any prescribed limsup behavior in the LIL is possible for these functions. In this sense Theorem~\ref{th1} is a complementary result to~\eqref{largelimsup}.\\

Theorem~\ref{th2} shows that any prescribed LIL behavior is also possible for the discrepancy $D_N^*$.

\begin{theorem} \label{th2}
For any given real number $\Lambda>0$ there exists a sequence of positive integers $(n_k)_{k \geq 1}$ satisfying
$$
n_{k+1} - n_k \in \{1,2\}, \qquad k \geq 1,
$$
such that we have
$$
\limsup_{N \to \infty} \frac{N D_N^*(\langle n_1 x\rangle, \dots, \langle n_N x\rangle)}{\sqrt{N \log \log N}} = \Lambda \qquad \textrm{for almost all $x$}. 
$$
\end{theorem}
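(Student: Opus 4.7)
The plan is to reduce Theorem~\ref{th2} to Theorem~\ref{th1} by applying the latter to a countable class of indicator functions. Let
$$
\F = \bigl\{ \mathbf{I}_{(a,b]} : 0 \leq a < b \leq 1,\ a,b \in \Q \bigr\},
$$
a countable family of $1$-periodic bounded-variation functions with $\|\mathbf{I}_{(a,b]}\| = \sqrt{(b-a)(1-(b-a))}$. Applying Theorem~\ref{th1} with $2\Lambda$ in place of $\Lambda$ yields a single sequence $(n_k)$ with $n_{k+1} - n_k \in \{1,2\}$ such that, almost surely, for every rational interval $I = (a,b]$,
$$
\limsup_{N \to \infty}\frac{|S_N(I)|}{\sqrt{N \log \log N}} = 2\Lambda \sqrt{|I|(1-|I|)}, \qquad S_N(I) := \sum_{k=1}^N \mathbf{I}_I(n_k x).
$$
The special choice $I_0 = (0,1/2]$ gives $\limsup_{N \to \infty} |S_N(I_0)|/\sqrt{N \log \log N} = \Lambda$, and since $N D_N^*(\langle n_1 x\rangle, \ldots, \langle n_N x\rangle) \geq |S_N(I_0)|$, the lower bound $\limsup_N N D_N^*/\sqrt{N\log\log N} \geq \Lambda$ is immediate.

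For the matching upper bound I would use a dyadic multi-scale approximation. Writing $a_j := \lfloor 2^j a\rfloor / 2^j$ for the depth-$j$ dyadic truncation of $a \in [0,1]$, a telescoping identity gives, for any $J_0 < J$,
$$
S_N(a) = S_N(a_{J_0}) + \sum_{j = J_0 + 1}^{J} \bigl(S_N(a_j) - S_N(a_{j-1})\bigr) + \bigl(S_N(a) - S_N(a_{J})\bigr),
$$
where each non-trivial increment $S_N(a_j) - S_N(a_{j-1})$ equals $S_N(I)$ for a unique dyadic interval $I_{j,i}$ of length $2^{-j}$. Choosing $J = J(N) := \lceil \log_2 N\rceil$, the tail remainder $S_N(a) - S_N(a_{J(N)})$ is $O(1)$ because under $n_{k+1} - n_k \in \{1,2\}$ any interval of length $\leq 1/N$ contains at most $O(1)$ of the points $\langle n_k x\rangle$, $k \leq N$. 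Moreover, for any fixed $J_0$ the finite maximum $\max_{0 \leq i \leq 2^{J_0}} |S_N(i/2^{J_0})|/\sqrt{N \log \log N}$ has limsup at most $\max_{0 \leq i \leq 2^{J_0}} 2\Lambda \sqrt{(i/2^{J_0})(1 - i/2^{J_0})} = \Lambda$, with the max attained at the midpoint $i = 2^{J_0 - 1}$.

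The hard part is controlling the intermediate chaining sum $\sum_{j = J_0 + 1}^{J(N)} \max_i |S_N(I_{j,i})|$. Individually, each term at scale $j$ has limsup $\leq 2\Lambda \cdot 2^{-j/2}$, and $\sum_{j > J_0} 2\Lambda\cdot 2^{-j/2} < \ve$ for $J_0 = J_0(\ve)$ large, but because both the number of intervals $2^j$ at each scale and the top level $J(N)$ diverge with $N$, one cannot simply exchange the finite maximum and the limsup. The extra ingredient required is an exponential tail estimate for $|S_N(I)|$, e.g.\ of the form $\p\bigl(|S_N(I)| \geq t\sqrt{N\log\log N}\bigr) \leq (\log N)^{-c\, t^2/|I|}$ for $t$ slightly exceeding $2\Lambda\sqrt{|I|(1-|I|)}$, which should be extractable from the construction underlying Theorem~\ref{th1}. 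A union bound over the $O(N)$ dyadic intervals at scales up to $J(N) = O(\log N)$ then renders the overall exceedance probability summable along a subsequence $N_k = 2^k$, and Borel--Cantelli together with a routine interpolation yields $\limsup_N \sup_{a \in [0,1]} |S_N(a)|/\sqrt{N\log\log N} \leq \Lambda + \ve$ almost surely; sending $\ve \downarrow 0$ closes the argument. This uniform chaining step---opening the black box of Theorem~\ref{th1} to extract sufficient tail control over a growing family of intervals---is where the bulk of the real work lies.
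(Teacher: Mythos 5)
Your skeleton is essentially the paper's: lower bound from the single interval of length $1/2$ (whose centered indicator has $L^2$-norm $1/2$, the maximum over all anchored intervals), upper bound by splitting an arbitrary anchored interval into a coarse dyadic part, handled exactly by the LIL for finitely many indicator functions, plus a small-interval remainder that must be controlled \emph{uniformly}. But the step you yourself flag as ``where the bulk of the real work lies'' is a genuine gap, not a routine extraction. Theorem~\ref{th1} is a purely qualitative existence statement about a countable class of functions; it supplies no tail estimate of the form $\p\bigl(|S_N(I)| \geq t\sqrt{N\log\log N}\bigr) \leq (\log N)^{-c t^2/|I|}$ for the sequence it produces, and no such bound can be read off its conclusion. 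To get uniformity over the continuum of small intervals one must go back into the probabilistic construction, and this is exactly the content of the paper's Lemma~\ref{lemma3}: for each \emph{fixed} scale $L$ it bounds $\limsup_N \max_s \sup_{0\leq a\leq 2^{-L}} |\sum_{k\leq N}\xi_k\sum_{\ell\in\mathcal{S}_k}\mathbf{I}_{[s2^{-L},s2^{-L}+a]}(\ell x)| / \sqrt{N\log\log N}$ by $5\lambda^{3/2}2^{-L/2}$, via a L\'evy-type reflection in the randomness $\omega$ (using the median of the binomial) combined with the maximal form of Bernstein's inequality, and Fubini in $x$. Note that the paper thereby avoids your multi-scale chaining entirely: it never descends to scale $1/N$, but fixes one scale $L$, gets the small-interval contribution $\leq C 2^{-L/2}$ uniformly, and lets $L\to\infty$ only at the very end. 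Your proposed route (union bound over $O(N)$ dyadic intervals at $O(\log N)$ scales) could in principle work, but it requires strictly more quantitative input than the single-scale maximal inequality, so it does not simplify the missing step --- it is the missing step.

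Two smaller points. First, your claim that the remainder $S_N(a)-S_N(a_{J(N)})$ is $O(1)$ because an interval of length $1/N$ contains $O(1)$ of the points $\langle n_k x\rangle$, $k\leq N$, is false for almost every $x$: since a.e.\ $x$ has unbounded partial quotients, such an interval can contain on the order of $N D_N^*(\langle x\rangle,\dots,\langle 2Nx\rangle)$ points, which by Lemma~\ref{lemmadiscr} is only $O(\log N(\log\log N)^{1+\ve})$; this is still $o(\sqrt{N\log\log N})$, so the conclusion survives, but the stated reason does not. Second, the logical order matters: the paper does not deduce Theorem~\ref{th2} from Theorem~\ref{th1}, but proves both from the underlying random construction (Lemmas~\ref{lemma1b} and~\ref{lemma3b}), choosing the parameters $\lambda, p$ so that $\lambda\sqrt{p(1-p)}/(\sqrt{2}\sqrt{\lambda+p})=\Lambda$; if you want to phrase the argument as ``apply Theorem~\ref{th1} to a countable class of indicators,'' you must still open the black box for the uniform small-interval bound, at which point you are reproving Lemma~\ref{lemma3}.
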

It is easily seen that Theorem~\ref{th2} remains true if the star-discrepancy $D_N^*$ is replaced by the \emph{extremal discrepancy} $D_N$, in which the supremum is taken over all intervals $[a,b] \subset [0,1]$ instead of only all anchored intervals $[0,a] \subset [0,1]$. For the sake of shortness we have confined ourselves to giving proofs only for $D_N^*$.\\

We want to make some more remarks, before turning to the proofs of Theorem~\ref{th1} and Theorem~\ref{th2}. Both of them are proved using a probabilistic construction, which is a typical feature of results about slowly growing sequences satisfying probabilistic limit theorems. As far as we know, no explicit example of a (in some sense) slowly growing sequence $(n_k)_{k \geq 1}$ for which $(f(n_k x))_{k \geq 1}$ for some appropriate $f$ satisfies the CLT or LIL is known.\\

Our probabilistic construction is similar to the one introduced by Salem--Zygmund, in the sense that for any $k$ we decide independently with a certain probability whether $k$ should be contained in our sequence $(n_k)_{k \geq 1}$ or not (actually, in our proof we form blocks of finitely many integers and always either take or discard the full block, but the general philosophy is the same). An other randomized way of generating slowly growing sequences $(n_k)_{k \geq 1}$ which are supposed to satisfy some probabilistic limit theorems is to let $(n_k)_{k \geq 1}$ be generated by a random walk; results for the model can be found for example in~\cite{bobkov,schatte,weber}.\\

Theorem~\ref{th2} has an interesting deterministic, number-theoretic counterpart. It is well-known that when taking $n_k=k,~k \geq 1$, then the discrepancy of $(\langle n_k x \rangle)_{k \geq 1}$ tends to zero almost as fast as $N^{-1}$, and in particular much faster than the speed of convergence specified by the Chung--Smirnov LIL (see Lemma~\ref{lemmadiscr} below for details). This is due to the close connection of the discrepancy of such sequences and continued fractions expansions, which was first observed by Ostrowski around 1920. Thus for almost all $x$ we have
\begin{equation} \label{arnold}
\limsup_{N \to \infty} \frac{N D_N^*(\langle x\rangle, \langle 2x \rangle, \dots, \langle N x\rangle)}{\sqrt{N \log \log N}} = 0. 
\end{equation}
However, as Arnol$'$d~\cite{arnold} showed, for any given $\Lambda$ there exists a number $x$ (which can be explicitly constructed in terms of its continued fractions expansion) such that the limsup in~\eqref{arnold} is at least $\Lambda$. We do not know if there also exists a construction of a real number $x$ for which the value of the limsup in~\eqref{arnold} is \emph{precisely} $\Lambda$.\\

Koksma's inequality (see Lemma~\ref{koksinequ} below) implies that for the sequence $(n_k)_{k \geq 1}$ satisfying~\eqref{largelimsup} we have
$$
\limsup_{N \to \infty} \frac{N D_N^*(\langle n_1 x\rangle, \dots, \langle n_N x\rangle)}{\sqrt{N \log \log N}} = \infty \qquad \textrm{for almost all $x$}. 
$$
It is a natural question to ask what for a sequence $(n_k)_{k \geq 1}$ having bounded gaps the slowest possible order of decay of $D_N^*(\langle n_1 x\rangle, \dots, \langle n_N x\rangle)$ for almost all $x$ is, and whether it can be significantly slower that $\sqrt{\log \log N}/\sqrt{N}$. A closely related question is the following: for any strictly increasing sequence $(n_k)_{k \geq 1}$ (not necessarily having bounded gaps), what is the slowest possible order of decay of $D_N^*(\langle n_1 x\rangle, \dots, \langle n_N x\rangle)$ for almost all $x$ \emph{in terms of the largest frequency $n_N$}? It is known that for any strictly increasing $(n_k)_{k \geq 1}$ we have
$$
N D_N^*(\langle n_1 x\rangle, \dots, \langle n_N x\rangle) = \mathcal{O} \left( \sqrt{N} (\log N)^{3/2+\ve} \right)
$$
for almost all $x$ (Baker~\cite{baker}), and that there exists a sequence $(n_k)_{k \geq 1}$ for which
$$
\limsup_{N \to \infty} \frac{N D_N^*(\langle n_1 x\rangle, \dots, \langle n_N x\rangle)}{\sqrt{N \log N}} = \infty
$$
for almost all $x$ (Berkes--Philipp~\cite{beph}). However, the sequence $(n_k)_{k \geq 1}$ in the Berkes--Philipp theorem grows very quickly (almost as fast as $e^{\sqrt{k}}$), and it would be very interesting to know if a similarly large discrepancy (for almost all $x$) is also possible for slowly growing $(n_k)_{k \geq 1}$. These problems are related to Carleson's theorem on the almost everywhere convergence of Fourier series, and also have a relation to certain sums involving greatest common divisors (see~\cite{abs}).\\

Finally, we want to mention the relation between our results and some problems in analysis. A \emph{Littlewood polynomial} is a polynomial all of whose coefficients are either $1$ or $-1$; that is, it is of the form
$$
p(x) = \sum_{k=0}^N a_k x^k, \qquad \textrm{where} \qquad  a_k \in \{-1,1\},~0\leq k \leq N. 
$$
It is a classical problem of analysis to study the possible size of $p(z)$ with respect to $L^p$ norms on the complex unit circle, in terms of $N$ (see for example~\cite{erdelyi} for a survey, and~\cite{borwein} for such polynomials in the context of random trigonometric series). The proof of Theorem~\ref{th1}, together with~\eqref{sumbounded} and~\eqref{largelimsup}, yields the following result, which can be seen as a result on the real parts of a sequence of Littlewood polynomials.
\begin{corollary} \label{co1}
For any $\Lambda \in [0,\infty]$ there exists a sequence $(a_k)_{k \geq 1} \in \{-1,1\}^\N$ such that
$$
\limsup_{N \to \infty} \frac{\left|\sum_{k=1}^N a_k \cos 2 \pi k x \right|}{\sqrt{N \log \log N}} = \Lambda \qquad \textup{for almost all $x$.}
$$
\end{corollary}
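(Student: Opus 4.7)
The plan is to transfer the prescribed LIL behavior from a suitably chosen subsequence $(n_j)\subset\N$ to a sign sequence $(a_k)$. Given $(n_j)$, I would set $a_k := 1$ if $k\in\{n_j\}$ and $a_k := -1$ otherwise; by the almost sure bound~\eqref{sumbounded},
$$
\sum_{k=1}^N a_k \cos 2\pi kx \;=\; 2 \sum_{j \leq M_N} \cos 2\pi n_j x \;-\; \sum_{k=1}^N \cos 2\pi kx \;=\; 2 \sum_{j \leq M_N} \cos 2\pi n_j x \;+\; O(1)
$$
almost everywhere, where $M_N := \#\{j : n_j \leq N\}$. Thus the LIL behavior of the Littlewood sum is controlled, up to a constant, by that of $\sum_j \cos 2\pi n_j x$.

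The three cases would then be handled separately. For $\Lambda=0$, take $a_k\equiv 1$ and invoke~\eqref{sumbounded} directly. For $\Lambda=\infty$, pick $(n_j)$ from~\eqref{largelimsup}, which satisfies $n_{j+1}-n_j\in\{1,2\}$ and hence $M_N\geq N/2$; the displayed identity then forces the Littlewood $\limsup$ to be infinite as well. For $\Lambda\in(0,\infty)$, apply Theorem~\ref{th1} to $f(x)=\cos 2\pi x$ (for which $\|f\|=1/\sqrt{2}$) with a free parameter $\Lambda_0\geq 0$, producing $(n_j)$ with $n_{j+1}-n_j\in\{1,2\}$ such that $\limsup_{M\to\infty}|\sum_{j\leq M}\cos 2\pi n_j x|/\sqrt{M\log\log M}=\Lambda_0/\sqrt{2}$. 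If the probabilistic construction in Theorem~\ref{th1} also yields a well-defined density $\rho := \lim_{N\to\infty} M_N/N \in [1/2,1]$, then the displayed identity gives
$$
\limsup_{N \to \infty} \frac{\left|\sum_{k=1}^N a_k \cos 2\pi kx\right|}{\sqrt{N \log \log N}} \;=\; \Lambda_0 \sqrt{2\rho},
$$
so the choice $\Lambda_0 = \Lambda/\sqrt{2\rho}$ realizes the target value.

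The main obstacle will be isolating, from the proof of Theorem~\ref{th1}, both the LIL constant $\Lambda_0/\sqrt{2}$ and the asymptotic density $\rho$ of the constructed sequence, and verifying that together they can realize any $\Lambda\in[0,\infty)$ via the product $\Lambda_0\sqrt{2\rho}$. The existence of $\rho$ should follow from the strong law of large numbers applied to the independent blocks used in the probabilistic construction behind Theorem~\ref{th1}, and the passage from the $\limsup$ over $M$ to the $\limsup$ over $N$ is then routine, since the bounded gaps $1\leq n_{j+1}-n_j\leq 2$ force $N/M_N\to 1/\rho$ and hence $\sqrt{N\log\log N}\sim \rho^{-1/2}\sqrt{M_N\log\log M_N}$.
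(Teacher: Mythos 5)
Your approach is correct and is essentially the one the paper intends: the paper offers no separate proof of Corollary~\ref{co1}, stating only that it follows from the \emph{proof} of Theorem~\ref{th1} together with~\eqref{sumbounded} and~\eqref{largelimsup}, and your reduction $a_k=2\cdot\mathds{1}_{\{k\in\{n_j\}\}}-1$ combined with~\eqref{sumbounded} and the three-case split ($\Lambda=0$, $\Lambda=\infty$, $\Lambda\in(0,\infty)$) is exactly that route. The one obstacle you flag is resolved directly by Section~\ref{aux2}: the construction has almost sure density $\rho=(\lambda+p)/(2\lambda)$ and LIL constant $\Lambda_0=\lambda\sqrt{2p(1-p)}/\sqrt{\lambda+p}$, so
$\Lambda_0\sqrt{2\rho}=\sqrt{2\lambda p(1-p)}$, which sweeps all of $(0,\infty)$ as $\lambda$ ranges over $\N$ and $p$ over $(0,1)$; even more conveniently, equation~\eqref{lilnk} already records the limsup normalized by the frequency cutoff $N$ rather than by the number of terms, which is precisely the normalization your displayed identity requires, so no separate passage from $M$ to $N$ is needed.
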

A similar result holds if $\cos 2\pi \cdot$ is replaced by $\sin 2 \pi \cdot$. Similar to the aforementioned problems concerning the discrepancy of $(\langle n_k x\rangle)_{k \geq 1}$ (and actually closely related to those problems), it would be interesting to know what the largest possible order of growth of
$$
\left|\sum_{k=1}^N a_k \cos 2 \pi k x \right|, \qquad a_k \in \{-1,1\},~k \geq 1,
$$
for almost all $x$ is. By Carleson's theorem this sum is of order $\mathcal{O} \left( \sqrt{N} (\log N)^{1/2+\ve} \right)$ for almost all $x$; there is a gap between this upper bound and the lower bound contained in Corollary~\ref{co1}, which remains open.

\section{The probabilistic model} \label{secmodel}

Let a real number $p \in (0,1)$ and a positive integer $\lambda$ be given. Let $(\Omega,\mathcal{A},\p)$ be a probability space, on which we can define a sequence $\xi_1, \xi_2, \dots$ of independent, identically distributed (i.i.d.) random variables, such that $\p(\xi_k=1)=p$ and $\p(\xi_k=0)=1-p$ for each $k \geq 1$. We are going to assign to each $k \geq 1$ a set $\mathcal{S}_k$ of $\lambda$ positive integers. Then the sequence $(m_k)_{k \geq 1}$ will be defined in such a way that it consists of all elements of $\mathcal{S}_k$ for those $k$ for which $\xi_k=1$, sorted in increasing order. Clearly a sequence $(m_k)_{k \geq 1}$ defined in this way is random, in the sense that it depends on an $\omega \in \Omega$. Finally, to obtain the sequence $(n_k)_{k \geq 1}$ satisfying the conclusion of Theorem~\ref{th1} and Theorem~\ref{th2}, we choose the parameters $p$ and $\lambda$ appropriately and take a sequence which consists of all odd numbers and all numbers of the form $2m_k$ for a ``typical'' realization of a random sequence $(m_k)_{k \geq 1}$.\\

More precisely, we define positive integers $\psi(r),~r \geq 1,$ satisfying the recursive relation
$$
\psi(1) + 2 \psi(2) + \dots + r (\psi(r)-1) ~<~ r^r ~\leq~ \psi(1) + 2 \psi(2) + \dots + r \psi(r).
$$
Then we also have
$$
\psi(1) + 2\psi(2) + \dots + (r-1) \psi(r-1) ~\geq~ (r-1)^{r-1} ~>~ \psi(1) + 2 \psi(2) + \dots + (r-1) (\psi(r-1)-1),
$$
and consequently
$$
r (\psi(r)-1) ~<~ r^r - (r-1)^{r-1} ~<~ r \psi(r) + r -1
$$
and
$$
\left| \psi(r) - \frac{r^r - (r-1)^{r-1}}{r} \right| < 1.
$$
This implies
$$
\psi(r) \sim r^{r-1} \qquad \textrm{as $r \to \infty$}.
$$

We set
$$
\Psi(r) = \psi(1) + 2\psi(2) +  \dots + r \psi(r).
$$
Then since by construction $\Psi(r)-r < r^r \leq \Psi(r)$, we have
$$
\Psi(r) \sim r^r \qquad \textrm{as $r \to \infty$.}
$$

Now let any $k \geq 1$ be given. Then there exists a number $r=r(k)$ such that $k \in (\Psi(r-1),\Psi(r)]$. We can write $k$ in the form
\begin{equation} \label{kform}
k = \Psi(r - 1) + \nu r + \rho
\end{equation}
for some numbers $\nu=\nu(k) \in \{0, 1, \dots, \psi(r)-1\}$ and $\rho = \rho(k) \in \{1, \dots, r\}$. For each $k$ we define a set $\mathcal{S}_k$, which consists of the numbers 
$$
\mathcal{S}_k = \{\lambda \Psi(r - 1) + \lambda \nu r + \rho + j r:~j=0, 1, \dots, \lambda-1\}.
$$
Thus for $k \in (\Psi(r-1),\Psi(r)]$, the elements of the set $\mathcal{S}_k$ form an arithmetic progression of $\lambda$ elements, with step size $r$.  More specifically, we have
\begin{eqnarray*}
\mathcal{S}_{\Psi(r-1)+1} & = & \big\{\lambda \Psi(r-1) + 1, \quad \lambda \Psi(r-1) + r + 1, ~\dots, ~\lambda \Psi(r-1) + (\lambda-1) r + 1 \big\},\\
\mathcal{S}_{\Psi(r-1)+2} & = & \big\{\lambda \Psi(r-1) + 2, \quad \lambda \Psi(r-1) + r + 2, ~\dots, ~\lambda \Psi(r-1) + (\lambda-1) r + 2 \big\},\\
& \vdots & \\
\mathcal{S}_{\Psi(r-1)+r} & = & \big\{\lambda \Psi(r-1) + r, \quad \lambda \Psi(r-1) + 2r, ~\dots, ~\lambda \Psi(r-1) + \underbrace{(\lambda-1)r + r}_{=\lambda r} \big\}.
\end{eqnarray*}
That means, the sets $\mathcal{S}_{\Psi(r-1)+1}, \dots, \mathcal{S}_{\Psi(r-1)+r}$ are interlaced in such a way that they form a partition of the set $\{\lambda\Psi(r-1)+1, \dots, \lambda \Psi(r-1)+\lambda r\}$. In a similar way, the sets $\mathcal{S}_{\Psi(r-1)+r+1},\dots, \mathcal{S}_{\Psi(r-1)+2r}$ form a partition of the set $\{\lambda\Psi(r-1)+\lambda r+1, \dots, \lambda \Psi(r-1)+2 \lambda r\}$, etc. Together, all the sets $(\mathcal{S}_k)_{\Psi(r-1) < k \leq \Psi(r)}$ form a partition of $\{\lambda \Psi(r-1) +1, \dots, \lambda \Psi(r)\}$.\\

Saying it in a oversimplified way, the sets $\mathcal{S}_k$ are constructed in such a way that the sum of the variances of the random variables $\xi_k \sum_{\ell \in \mathcal{S}_k} f(\ell x)$ over $k \in (\Psi(r-1),\Psi(r)]$ is roughly equal to $\lambda^2 \|f\|^2 (\Psi(r)-\Psi(r-1)) \V \xi_k$ whenever $x$ is an element of a set $A_r$ (which consists of all numbers which are close to a rational number with denominator $r$). The number of indices for which $x$ is contained in such a set $A_r$ will have positive relative frequency; consequently, the LIL will hold with right-hand side $\sqrt{\lambda^2 \|f\|^2 \V \xi_k}$ for the sums 
\begin{equation} \label{sumsk}
\sum_{k=1}^N \xi_k \sum_{\ell \in \mathcal{S}_k} f(\ell x).
\end{equation}
Now the random sequence $(m_k)_{k \geq 1}$ is defined such that it contains the elements of $\mathcal{S}_k$ if and only if $\xi_k=1$; then the LIL for~\eqref{sumsk} can be rewritten into an LIL for $\sum_{k=1}^N f(m_k x)$. Finally, we will choose the parameters $p$ and $\lambda$ in an appropriate way and define a sequence $(n_k)_{k \geq 1}$ which consists of all odd numbers, plus of all the numbers $(2 m_k)_{k \geq 1}$ for a ``typical'' realization of a random sequence $(m_k)_{k \geq 1}$. This sequence $(n_k)_{k \geq 1}$ will then have the desired properties.\\

The following Section~\ref{prel} contains several auxiliary results. In Section~\ref{aux1} we will prove several properties of sums involving $\xi_1,\xi_2, \dots$, which hold almost surely on $(\Omega,\mathcal{A},\p)$. Subsequently in Section~\ref{aux2} we will show how these results concerning sequences of random variables can be interpreted as results concerning random sequences $(m_k)_{k \geq 1}$ of integers, and what they imply for $(n_k)_{k \geq 1}$. Finally, in Section~\ref{secproofs} we will prove Theorem~\ref{th1} and~\ref{th2}.

\section{Preliminaries} \label{prel}

The following lemmas are classical results from number theory, or, more precisely, from the theory of uniform distribution modulo one and discrepancy theory. They can be found for example in~\cite{dts,knu}.

\begin{lemma}[Equidistribution theorem] \label{lemmaunif}
For any irrational real number $x$ and any interval $A \subset [0,1]$ we have
$$
\lim_{N \to \infty} \frac{1}{N} \sum_{k=1}^N \mathds{1}_A (\langle k x \rangle) = \mu (A).
$$
\end{lemma}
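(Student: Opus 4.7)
The plan is to prove the equidistribution theorem via Weyl's criterion, that is, to reduce the statement about indicators of intervals to a statement about exponential sums. First I would establish the equivalence (in the direction we need): for a fixed irrational $x$, if
$$
\frac{1}{N} \sum_{k=1}^N g(\langle kx \rangle) \longrightarrow \int_0^1 g(t)\,dt \qquad (N \to \infty)
$$
holds for every continuous $1$-periodic $g$, then it also holds with $g = \mathds{1}_A$. This is a routine sandwich argument: given $\ve > 0$, pick continuous $1$-periodic functions $g_\ve^-, g_\ve^+$ with $g_\ve^- \leq \mathds{1}_A \leq g_\ve^+$ and $\int_0^1 (g_\ve^+ - g_\ve^-)\,dt < \ve$, apply the continuous version to each, and let $\ve \to 0$.

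Next I would reduce the class of test functions from continuous $1$-periodic functions to trigonometric polynomials, using the Weierstrass (or Fejér) approximation theorem: any continuous $1$-periodic function can be uniformly approximated by finite linear combinations of $\{e^{2\pi i h t}\}_{h \in \Z}$. By linearity and the trivial case $h = 0$, it then suffices to prove that for every nonzero integer $h$,
$$
\frac{1}{N} \sum_{k=1}^N e^{2\pi i h k x} \longrightarrow 0 \qquad (N \to \infty).
$$

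Finally, this last statement is a direct geometric sum computation. Since $x$ is irrational, $hx \notin \Z$ for every $h \neq 0$, so $e^{2 \pi i h x} \neq 1$, and
$$
\left| \sum_{k=1}^N e^{2\pi i h k x} \right| = \left| \frac{e^{2\pi i h x}\left(e^{2\pi i h N x} - 1\right)}{e^{2\pi i h x} - 1} \right| \leq \frac{2}{\left| e^{2\pi i h x} - 1 \right|},
$$
which is a constant (depending on $h$ and $x$, but not on $N$). Dividing by $N$ and letting $N \to \infty$ gives the claim. There is no real obstacle here; the only subtle point is the interchange of limit and infinite sum in the Weierstrass step, which is handled by the uniform convergence of the trigonometric approximants, so the argument proceeds cleanly.
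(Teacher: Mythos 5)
Your proof is correct: it is the standard Weyl-criterion argument (sandwich an interval indicator between continuous periodic functions, approximate those uniformly by trigonometric polynomials, and kill each nonconstant exponential sum $\sum_{k\le N} e^{2\pi i h k x}$ by the geometric-series bound $2/|e^{2\pi i h x}-1|$, valid since $hx\notin\Z$ for irrational $x$ and $h\neq 0$). The paper does not prove this lemma at all --- it is quoted as a classical fact with references to Drmota--Tichy and Kuipers--Niederreiter, and your argument is exactly the proof given in those sources, so there is nothing to reconcile.
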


\begin{lemma} \label{lemmadiscr}
For any $\ve>0$ we have
\begin{equation} \label{discres}
D_N^* (\langle x \rangle, \langle 2 x \rangle, \dots, \langle N x \rangle) = \mathcal{O} \left(\frac{\log N(\log \log N)^{1+\ve}}{N} \right) \quad \textrm{as $N \to \infty$}
\end{equation}
for almost all real numbers $x$.
\end{lemma}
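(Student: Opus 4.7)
The plan is to reduce the discrepancy estimate to a classical metric statement about partial quotients in the continued fraction expansion of $x$. Let $x \in (0,1)$ have continued fraction expansion $[0;a_1,a_2,\ldots]$ with convergents $p_j/q_j$, and let $K=K(N,x)$ be the largest index with $q_K \leq N$. Since the denominators $q_j$ satisfy $q_{j+1} \geq q_j + q_{j-1}$ and thus grow at least as fast as the Fibonacci numbers, we have $K = O(\log N)$ almost surely.

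The first step would be to invoke a deterministic bound, going back to Ostrowski and well-documented in \cite{knu}, which asserts that
$$
N\, D_N^*(\langle x\rangle,\langle 2x\rangle,\ldots,\langle Nx\rangle) \leq C \sum_{j=1}^{K+1} a_j
$$
for some absolute constant $C$. This follows from the three-distance theorem combined with the Ostrowski expansion $N = \sum b_j q_j$ with $0 \leq b_j \leq a_{j+1}$; each ``level'' of the expansion contributes at most $O(a_{j+1})$ to the discrepancy.

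The second step is to control $\sum_{j \leq K} a_j$ metrically. Under the Gauss measure (equivalent to Lebesgue measure on $[0,1]$) the partial quotients satisfy $\mathbb{P}(a_j > t) \asymp 1/(t \log 2)$ and form an exponentially $\psi$-mixing sequence. Since $\mathbb{E}[a_j] = \infty$, one cannot bound $a_j$ pointwise by any summable quantity via Borel--Cantelli directly. Instead I would use a two-level truncation: set $\tilde a_j = a_j \mathds{1}_{\{a_j \leq j(\log j)^{1+\ve}\}}$, bound the truncated sum by a variance/mixing argument, and control the discarded mass by showing that only finitely many large $a_j$'s occur in any dyadic block, again via Borel--Cantelli applied to the mixing sequence. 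This yields the almost-sure version of Khinchin's theorem,
$$
\sum_{j=1}^K a_j = O\bigl(K (\log K)^{1+\ve}\bigr) \quad \text{for almost every } x.
$$

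Plugging $K = O(\log N)$ (and hence $\log K = O(\log \log N)$) into the deterministic estimate gives $N D_N^* = O(\log N (\log \log N)^{1+\ve})$, as required. The main obstacle is the metric bound on the partial sums $\sum a_j$: because of the heavy-tailed distribution of the $a_j$, the almost-sure asymptotics are strictly worse than the $L^1$-in-probability asymptotics $\sum a_j / (K \log K) \to 1/\log 2$, and one has to carefully choose the truncation thresholds so that the extra $(\log K)^\ve$ factor absorbs both the tail contribution and the fluctuations coming from the mixing estimate.
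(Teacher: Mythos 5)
The paper does not actually prove this lemma: it is quoted as a classical metric theorem (essentially Khinchin, 1924) with a pointer to \cite{dts,knu}, so there is no in-paper argument to compare against. Your sketch is the standard proof found in those references, and its outline is correct: the Ostrowski/three-distance bound $N D_N^* \ll \sum_{j\leq K+1} a_j$ for $q_K \leq N < q_{K+1}$, combined with the metric estimate $\sum_{j\leq m} a_j = O\bigl(m(\log m)^{1+\ve}\bigr)$ for almost every $x$, gives exactly \eqref{discres}. Two small corrections. First, $K = O(\log N)$ is not merely an almost-sure statement but holds for every irrational $x$, since $q_{j+1}\geq q_j+q_{j-1}$ forces Fibonacci growth of the denominators. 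Second, the division of labour in your truncation step can be stated more cleanly: for the discarded mass you need only the convergence half of the Borel--Cantelli lemma (no mixing), since $\sum_j 1/(j(\log j)^{1+\ve})<\infty$ already yields $a_j\leq j(\log j)^{1+\ve}$ for all large $j$, almost everywhere, so the exceptional terms contribute a finite random constant; the mixing input is needed only for the truncated sum, where $\E\tilde a_j\asymp\log j$ and $\V\tilde a_j\ll j(\log j)^{1+\ve}$ give total variance $O(K^2(\log K)^{1+\ve})$, and Chebyshev along $K=2^n$ together with monotonicity of the (non-centered) partial sums yields $\sum_{j\leq K}\tilde a_j=O(K(\log K)^{1+\ve})$. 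Alternatively, the Diamond--Vaaler trimmed-sum theorem plus Borel--Bernstein for the single largest partial quotient gives the same bound with less work. With these adjustments your argument is complete and is the intended proof of the cited result.
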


By changing the argument from $x$ to $2 x$, it is easily seen that the discrepancy estimate~\eqref{discres} implies that we also have
$$
D_N^* (\langle 2x \rangle, \langle 4 x \rangle, \dots, \langle 2 N x \rangle) = \mathcal{O} \left(\frac{\log N(\log \log N)^{1+\ve}}{N} \right) \quad \textrm{as $N \to \infty$}
$$
for almost all $x$; that is, the conclusion of Lemma~\ref{lemmadiscr} remains valid if we replace the sequence $(k)_{k \geq 1}$ of all positive integers by the sequence $(2k)_{k \geq 1}$ of even positive integers. Since $\N$ can be partitioned into even and odd numbers, a similar result must also hold for $(2k-1)_{k \geq 1}$. This is the statement of the following lemma.
\begin{lemma} \label{lemmadiscr2}
For any $\ve>0$ we have
\begin{equation*}
D_N^* (\langle x \rangle, \langle 3 x \rangle, \dots, \langle (2N-1) x \rangle) = \mathcal{O} \left(\frac{\log N(\log \log N)^{1+\ve}}{N} \right) \quad \textrm{as $N \to \infty$}
\end{equation*}
for almost all real numbers $x$.
\end{lemma}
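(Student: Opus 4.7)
The plan is to follow the hint supplied in the paragraph preceding the statement: split the first $2N$ multiples of $x$ into their even- and odd-indexed subsequences, so that the discrepancy of the odd subsequence is controlled by the discrepancy of the full sequence and the discrepancy of the even subsequence, both of which are already bounded by Lemma~\ref{lemmadiscr} (applied to $x$ and to $2x$, respectively).

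Concretely, for fixed $a \in [0,1]$ and $x \in \R$, I would introduce the counting functions
$$
A^{\text{odd}}_N(a,x) = \sum_{k=1}^N \mathds{1}_{[0,a]}(\langle (2k-1)x\rangle), \quad A^{\text{even}}_N(a,x) = \sum_{k=1}^N \mathds{1}_{[0,a]}(\langle 2kx\rangle),
$$
and $A_{2N}(a,x) = \sum_{k=1}^{2N} \mathds{1}_{[0,a]}(\langle kx\rangle)$. Since $\{1,\ldots,2N\}$ is the disjoint union of odd and even indices, $A_{2N}(a,x) = A^{\text{odd}}_N(a,x) + A^{\text{even}}_N(a,x)$, whence
$$
A^{\text{odd}}_N(a,x) - Na = \bigl( A_{2N}(a,x) - 2Na \bigr) - \bigl( A^{\text{even}}_N(a,x) - Na \bigr).
$$
Taking absolute values, then supremum over $a \in [0,1]$, yields
$$
N D_N^*(\langle x\rangle,\langle 3x\rangle,\ldots,\langle (2N-1)x\rangle) \leq 2N D_{2N}^*(\langle x\rangle,\ldots,\langle 2Nx\rangle) + N D_N^*(\langle 2x\rangle,\ldots,\langle 2Nx\rangle).
$$

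To finish, I would apply Lemma~\ref{lemmadiscr} directly to the first summand on the right (using $\log(2N) \sim \log N$ and $\log\log(2N) \sim \log\log N$), and apply the $x \mapsto 2x$ version of Lemma~\ref{lemmadiscr} --- which is the observation stated just before Lemma~\ref{lemmadiscr2} in the text --- to the second summand. Both contributions are then $\mathcal{O}(\log N (\log\log N)^{1+\ve})$ outside a null set in $x$, so after division by $N$ the desired estimate holds for almost every $x$.

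There is no real obstacle: the argument is essentially algebraic, and the only point worth noting is that the exceptional null set is the union of two null sets, namely the set where Lemma~\ref{lemmadiscr} fails for $x$ and its preimage under the measure-preserving map $x \mapsto 2x$ on $[0,1]$ (equivalently, the set where Lemma~\ref{lemmadiscr} fails for $2x$), both of which have Lebesgue measure zero.
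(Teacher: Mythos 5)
Your proof is correct and follows essentially the same route as the paper, which establishes the lemma exactly by the decomposition of $\{1,\dots,2N\}$ into odd and even indices and the change of variable $x\mapsto 2x$ in Lemma~\ref{lemmadiscr}. Your write-up merely makes explicit the triangle-inequality step and the null-set bookkeeping that the paper leaves implicit.
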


\begin{lemma}[Koksma's inequality] \label{koksinequ}
Let $f$ be a function which has bounded variation on the unit interval. Let $y_1, \dots, y_N$ be points in $[0,1]$. Then
$$
\left| \frac{1}{N} \sum_{k=1}^N f(y_k) - \int_0^1 f(y)~dy \right| \leq \left( \textup{Var}_{[0,1]} f \right) D_N^*(y_1, \dots, y_N).
$$
\end{lemma}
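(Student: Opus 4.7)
The plan is to reformulate both sides of the inequality in terms of cumulative distribution functions and then use Riemann--Stieltjes integration by parts to isolate a factor that the discrepancy controls. Set $F_N(t) = \frac{1}{N}\#\{k \leq N : y_k \leq t\}$ and $F(t) = t$; the basic input is the inequality $\sup_{t \in [0,1]} |F_N(t) - F(t)| \leq D_N^*(y_1,\dots,y_N)$, which is essentially the definition of the star-discrepancy, together with the endpoint equality $F_N(1) = F(1) = 1$.

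The two averages can be rewritten as Stieltjes integrals:
$$
\frac{1}{N}\sum_{k=1}^N f(y_k) = \int_0^1 f(t)\,dF_N(t), \qquad \int_0^1 f(y)\,dy = \int_0^1 f(t)\,dF(t),
$$
so the quantity to estimate is $\int_0^1 f\,d(F_N - F)$. Decomposing $f$ as a difference of two monotone functions to access the Stieltjes calculus, integration by parts yields
$$
\int_0^1 f(t)\,d(F_N - F)(t) = \bigl[f(t)(F_N(t) - F(t))\bigr]_0^1 - \int_0^1 (F_N(t) - F(t))\,df(t).
$$
The boundary term at $t=1$ vanishes, and the contribution at $t=0$ is handled by passing to the one-sided empirical function $F_N^-(t) = \frac{1}{N}\#\{k : y_k < t\}$, which satisfies $F_N^-(0) = 0$ and still $|F_N^- - F| \leq D_N^*$. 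Bounding the remaining integral by $\sup_t |F_N^- - F|$ times the total variation $\int_0^1 |df| = \textup{Var}_{[0,1]} f$ delivers the desired estimate
$$
\left| \frac{1}{N}\sum_{k=1}^N f(y_k) - \int_0^1 f(y)\,dy \right| \leq D_N^* \cdot \textup{Var}_{[0,1]} f.
$$

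The only delicate point, and the step I expect to cost the most care, is that $F_N$ and $f$ may share discontinuities (for instance both can jump at the same $y_k$), whereas the textbook Riemann--Stieltjes integration-by-parts formula assumes the two integrators have no common discontinuities. The cleanest way around this is to bypass the symmetric form of integration by parts altogether: replace $f$ by its right-continuous modification (which changes neither $\int_0^1 f\,dy$ nor $\textup{Var}_{[0,1]} f$), express $f(y_k) - \int_0^1 f(y)\,dy$ via the Lebesgue--Stieltjes measure of $f$ and Fubini to arrive directly at $\int_0^1 (F(t) - F_N^-(t))\,df(t)$, and then apply the same absolute-value bound. This sidesteps any issue about common jumps while still yielding the sharp constant $\textup{Var}_{[0,1]} f$.
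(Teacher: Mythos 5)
The paper itself offers no proof of this lemma: it is quoted as a classical fact with a pointer to the references \cite{dts,knu}, where it is established by exactly the Abel-summation / Riemann--Stieltjes integration-by-parts route you outline. Your main computation is the standard one, and you correctly isolate the single delicate point (shared discontinuities of $f$ and the empirical distribution function). However, the fix you propose for that point does not work as stated. Passing to the right-continuous modification $\tilde f$ leaves $\int_0^1 f$ unchanged and can only decrease the variation (it need not preserve it: for $f=\mathds{1}_{\{1/2\}}$ the modification is identically zero while $\textup{Var}_{[0,1]}f=2$), but it \emph{changes the left-hand side}: if some $y_k$ is a jump point of $f$ with $f(y_k)\neq f(y_k+)$, then $\frac1N\sum_k\tilde f(y_k)\neq\frac1N\sum_k f(y_k)$, so the inequality for $\tilde f$ does not imply the inequality for $f$. (With $N=1$, $y_1=1/2$ and $f=\mathds{1}_{\{1/2\}}$ the two left-hand sides are $1$ and $0$.) Estimating the discrepancy between $f$ and $\tilde f$ at the sample points afterwards costs a constant factor, so the sharp bound $\left(\textup{Var}_{[0,1]}f\right)D_N^*$ is not recovered that way. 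A smaller issue of the same kind: once you replace $F_N$ by $F_N^-$, the boundary term at $t=1$ no longer vanishes automatically, since $F_N^-(1)<1$ whenever some $y_k=1$.

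The classical way to sidestep the common-jump problem entirely --- and the proof in the cited references --- is to sort the points, $y_1\leq\dots\leq y_N$, set $y_0=0$, $y_{N+1}=1$, and integrate by parts on each subinterval against the \emph{continuous} integrator $t\mapsto t-\frac{k}{N}$, which yields
$$
\frac1N\sum_{k=1}^N f(y_k)-\int_0^1 f(t)\,dt=\sum_{k=0}^{N}\int_{y_k}^{y_{k+1}}\left(t-\frac{k}{N}\right)df(t).
$$
Since $\left|t-\frac{k}{N}\right|\leq D_N^*(y_1,\dots,y_N)$ for $t\in[y_k,y_{k+1}]$ and the variations of $f$ over the subintervals sum to $\textup{Var}_{[0,1]}f$, the inequality follows for arbitrary $f$ of bounded variation, with no continuity normalization. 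Your Fubini argument is correct and clean for right-continuous $f$ (and hence for the applications in this paper, where for almost every $x$ the points avoid the countable jump set of $f$); it just does not, as written, cover the general case asserted in the lemma.
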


For the proof of Lemma~\ref{lemma1} we will need the following simple fact. For a proof, see for example~\cite[Theorem 2]{salat}.
\begin{lemma} \label{lemmadensity}
Let $(c_k)_{k \geq 1}$ be a non-increasing sequence of non-negative real numbers. Let $\mathcal{N}$ denote a subset of $\N$ which has positive lower asymptotic density. Then 
$$
\sum_{k=1}^\infty c_k = \infty \qquad \textrm{implies that} \qquad \sum_{k \in \mathcal{N}} c_k = \infty.
$$
\end{lemma}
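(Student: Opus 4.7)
My plan is to prove the lemma by Abel summation (summation by parts), exploiting the density hypothesis at every scale rather than just at a single endpoint. Write $A(k) := |\mathcal{N} \cap [1, k]|$ and let $\delta := \liminf_{k \to \infty} A(k)/k > 0$. Fix $K$ large enough that $A(k) \geq (\delta/2) k$ for every $k \geq K$, and put $T_N := \sum_{k=1}^{N} c_k$ and $S_N := \sum_{k \in \mathcal{N},~k \leq N} c_k$; the task is to show $S_N \to \infty$.

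First I would write $S_N = \sum_{k=1}^{N} (A(k) - A(k-1)) c_k$ and apply summation by parts to get
$$
S_N = A(N) c_N + \sum_{k=1}^{N-1} A(k) (c_k - c_{k+1}).
$$
Both $A(k)$ and the increments $c_k - c_{k+1}$ are non-negative, so the terms with $k < K$ contribute non-negatively and can be discarded; for $k \geq K$ the density bound gives $A(k) \geq (\delta/2) k$, and likewise $A(N) c_N \geq (\delta/2) N c_N$. Combining these and using the elementary Abel identity
$$
N c_N + \sum_{k=1}^{N-1} k (c_k - c_{k+1}) = T_N,
$$
I would arrive at
$$
S_N \geq \frac{\delta}{2} T_N - C_K, \qquad \textrm{where} \qquad C_K := \frac{\delta}{2} \sum_{k=1}^{K-1} k (c_k - c_{k+1})
$$
is a finite constant depending only on $c_1, \dots, c_K$. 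Since $T_N \to \infty$ by hypothesis, $S_N \to \infty$, which is the claim.

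\textbf{Expected difficulty.} The computation itself is routine; the only real subtlety is choosing the right argument. A naive ``greedy'' lower bound, obtained by minimizing $\sum_{k \in T} c_k$ over subsets $T \subset [1, N]$ with $|T| = A(N)$, yields only $S_N \geq \sum_{k = N - A(N) + 1}^{N} c_k$, which for a borderline example like $c_k = 1/k$ remains bounded and so is useless. The advantage of the Abel argument is that it invokes $A(k) \geq (\delta/2) k$ at every $k \in [K, N]$, so the density hypothesis is integrated over the whole range and the divergence of $\sum c_k$ transfers to $S_N$ at the cost of only a multiplicative factor $\delta/2$.
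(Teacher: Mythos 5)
Your proof is correct. The paper does not prove Lemma~\ref{lemmadensity} at all --- it simply cites Theorem~2 of \v{S}al\'at's paper on subseries --- so there is no internal argument to compare against; your Abel-summation proof is a clean, self-contained substitute. The two key steps both check out: the summation-by-parts identity $S_N = A(N)c_N + \sum_{k=1}^{N-1} A(k)(c_k - c_{k+1})$ holds with $A(0)=0$, every term is non-negative (monotonicity of $(c_k)$ is used exactly here, and is genuinely needed --- the lemma is false without it), and the identity $Nc_N + \sum_{k=1}^{N-1} k(c_k - c_{k+1}) = T_N$ is the standard Abel rearrangement. Discarding the finitely many terms with $k < K$ and replacing $A(k)$ by $(\delta/2)k$ on $[K,N]$ then gives $S_N \geq (\delta/2)T_N - C_K$ with $C_K$ finite, and the conclusion follows. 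Your remark about why the greedy bound $\sum_{k=N-A(N)+1}^{N} c_k$ fails for $c_k = 1/k$ is also accurate and correctly identifies why the density hypothesis must be exploited at all scales rather than only at $k=N$.
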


\section{Sequences of random variables} \label{aux1}

In this section we prove several auxiliary results concerning sums which involve the random variables $\xi_1,\xi_2,\dots$ defined in Section~\ref{secmodel}. We first state all results, and give proofs afterward.

\begin{lemma} \label{lemma1}
For any trigonometric polynomial without constant term $g(x)$ we have, $\p$-almost surely, that
$$
\limsup_{N \to \infty} \frac{\left| \sum_{k=1}^N \xi_k \sum_{\ell \in \mathcal{S}_k} g(\ell x) \right|}{\sqrt{N \log \log N}} = \sqrt{2 p(1-p)} ~ \lambda \|g\| \qquad \textrm{for almost all $x$}. 
$$
\end{lemma}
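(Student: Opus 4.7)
The natural plan is to fix $x$ in a set of full Lebesgue measure and apply Kolmogorov's law of the iterated logarithm for sums of independent bounded random variables to $Y_k(\omega) := \xi_k(\omega)\,T_k(x)$, where $T_k(x) := \sum_{\ell\in\mathcal{S}_k} g(\ell x)$. These are independent on $(\Omega,\mathcal{A},\p)$, bounded by $\lambda\|g\|_\infty$, with variance $p(1-p)\,T_k(x)^2$, so setting $B_N^2 := p(1-p)\sum_{k=1}^N T_k(x)^2$ the classical LIL gives
$$\limsup_{N\to\infty}\frac{\big|\sum_{k=1}^N Y_k - \E\sum_{k=1}^N Y_k\big|}{\sqrt{2 B_N^2 \log\log B_N^2}} = 1 \qquad \p\text{-a.s.},$$
once $B_N^2 \to \infty$. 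The lemma therefore reduces to three deterministic facts about $x$: a uniform upper bound $B_N^2 \leq p(1-p)\lambda^2\|g\|^2\,N\,(1+o(1))$, a matching lower bound $B_{\Psi(r)}^2 \geq p(1-p)\lambda^2\|g\|^2\,\Psi(r)(1-o(1))$ along a subsequence of $r$'s of positive lower density in $\N$, and negligibility of the centering $\E\sum Y_k = p\sum T_k(x) = O(1)$; Fubini then interchanges ``$\p$-a.s.'' with ``for a.e.\ $x$''.

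To control the variance, expand $g(y) = \sum_{0<|m|\leq M} c_m\, e^{2\pi i m y}$ and write, for $k$ in the $r$-th block with base $b_k = \lambda\Psi(r-1)+\lambda\nu r+\rho$,
$$T_k(x) = \sum_m c_m\, e^{2\pi i m b_k x}\, G_\lambda^{(m)}(rx), \qquad G_\lambda^{(m)}(y) := \sum_{j=0}^{\lambda-1} e^{2\pi i mjy}.$$
Squaring and summing $|T_k(x)|^2$ over a full or partial block produces a diagonal ($m_1=m_2$) term equal to $K\sum_m |c_m|^2\,|G_\lambda^{(m)}(rx)|^2$, where $K$ counts indices in the block, plus off-diagonal terms that factor into products of two short geometric sums in $\nu$ and $\rho$; the standard Diophantine estimate $|G_N^{(m)}(y)|\leq 1/\mathrm{dist}(my,\Z)$ bounds this off-diagonal contribution by $O(r)$ per block for almost every $x$, negligible against the diagonal. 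Since $|G_\lambda^{(m)}(rx)|\leq\lambda$ pointwise, the diagonal is at most $K\lambda^2\|g\|^2$, yielding the required uniform upper bound after summing over blocks. On the other hand, choosing $\delta>0$ so small that $|G_\lambda^{(m)}(rx)|\geq\lambda(1-\epsilon)$ whenever $\mathrm{dist}(rx,\Z)<\delta$ and $|m|\leq M$, and setting $R(x) := \{r:\mathrm{dist}(rx,\Z)<\delta\}$, Lemma~\ref{lemmaunif} applied to $(rx)_{r\geq 1}$ shows $R(x)$ has density $2\delta$ for every irrational $x$, hence positive lower density; the diagonal on block $r\in R(x)$ then exceeds $\lambda^2\|g\|^2(1-\epsilon)^2(\Psi(r)-\Psi(r-1))\sim \lambda^2\|g\|^2\Psi(r)$, using $\Psi(r-1)=o(\Psi(r))$. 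The same Fourier calculation, applied once more, produces $\sum_{k=1}^N T_k(x) = O(1)$ for almost every $x$, handling the centering.

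The upper bound combined with the LIL immediately yields the ``$\leq$'' direction of the conclusion. For the matching ``$\geq$'' direction one cannot directly substitute into the original LIL, since its $\limsup$ need not be attained on the good subsequence; instead I will apply Kolmogorov's LIL to the sequence of independent increments $U_j := S_{\Psi(r_j)} - S_{\Psi(r_{j-1})}$, where $r_1<r_2<\cdots$ enumerate $R(x)$. Because $\Psi(r)\sim r^r$ grows super-exponentially, $\V U_j$ dominates the preceding cumulative variance, so the LIL for $\sum_j U_j$ produces infinitely many $j$ with $|S_{\Psi(r_j)}|\geq(\sqrt{2p(1-p)}\lambda\|g\|-\epsilon)\sqrt{\Psi(r_j)\log\log\Psi(r_j)}$; Lemma~\ref{lemmadensity} is useful here to guarantee $\sum_{r\in R(x)}(\Psi(r)-\Psi(r-1)) = \infty$ and hence $B_{\Psi(r_j)}^2\to\infty$. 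The principal obstacle is organizing the Diophantine, equidistribution, and variance-asymptotic estimates so that they hold simultaneously on a single null set of exceptional $x$, with errors uniform in $r$.
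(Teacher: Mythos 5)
Your setup, the Fourier/diagonal-versus-off-diagonal variance computation, the equidistribution argument producing a positive-density set $R(x)$ of good blocks, the treatment of the centering, and the upper half of the LIL via Kolmogorov's theorem for the uniformly bounded $Y_k$ all match the paper's proof in substance. The genuine gap is in the lower bound. You propose to apply Kolmogorov's LIL to the independent increments $U_j = S_{\Psi(r_j)}-S_{\Psi(r_{j-1})}$, noting that $\V U_j$ dominates the cumulative variance because $\Psi(r)\sim r^r$. But this super-exponential growth is precisely the regime in which Kolmogorov's LIL does \emph{not} apply: the theorem requires $\|U_j\|_\infty = o\bigl(B_j/\sqrt{\log\log B_j^2}\bigr)$, whereas here $\|U_j\|_\infty$ can be of order $\Psi(r_j)-\Psi(r_{j-1})\asymp B_j^2$, so the hypothesis fails as badly as possible. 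When a single summand carries essentially all the accumulated variance, the limsup is governed by the tail of that summand alone, and no blockwise LIL can be quoted off the shelf; some normal-approximation input for each $U_j$ is unavoidable.

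The repair --- and what the paper actually does --- is to apply the Berry--Esseen theorem (equivalently, Kolmogorov's lower exponential inequality) \emph{inside} each block: the block sum $\sum_{k=\Psi(r-1)+1}^{\Psi(r)}(X_k-\E X_k)$ is a sum of $\Psi(r)-\Psi(r-1)$ independent variables, each bounded by $\lambda\|g\|_\infty$, with variance $V_r\gtrsim \Psi(r)/r^4$ for a.e.\ $x$ (a Diophantine lower bound valid for \emph{all} large $r$, which keeps the Berry--Esseen error at $O(r^{-r/2+6})$). This yields $\p\bigl(|{\rm block}_r|\geq\sqrt{2(1-\ve)V_r\log r}\bigr)\approx r^{-(1-\ve)}$; restricting to the positive-density set of good $r$, where $V_r\log r\gtrsim(1-\ve)^2\lambda^2\|g\|^2 p(1-p)\Psi(r)\log\log\Psi(r)$, Lemma~\ref{lemmadensity} gives divergence of these non-increasing probabilities over that set (this, rather than checking $B^2_{\Psi(r_j)}\to\infty$, is the real role of Lemma~\ref{lemmadensity}), and the second Borel--Cantelli lemma plus the already-proved upper bound applied to $S_{\Psi(r-1)}$ finishes the lower bound. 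With that single replacement your argument closes; the remaining discrepancies (the off-diagonal contribution is $O(r^2)$ rather than $O(r)$ per block once one uses the a.e.\ bound $\mathrm{dist}(nx,\Z)\gg n^{-2}$, and the centering is $O(\log N)$ rather than $O(1)$ because of the symmetric-difference correction~\eqref{deltaapprox}) are harmless.
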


\begin{lemma} \label{lemma2}
For any function $h(x)$ satisfying~\eqref{f} we have, $\p$-almost surely, that
$$
\limsup_{N \to \infty} \frac{\left| \sum_{k=1}^N \xi_k \sum_{\ell \in \mathcal{S}_k} h(\ell x) \right|}{\sqrt{N \log \log N}} \leq \frac{\lambda}{\sqrt{2}} ~ \|h\| \qquad \textrm{for almost all $x$}. 
$$
\end{lemma}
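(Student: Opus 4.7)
The plan is to deduce Lemma~\ref{lemma2} from Lemma~\ref{lemma1} by approximating $h$ in $L^2$ by trigonometric polynomials and absorbing the approximation error into the $\|h\|$ on the right-hand side. Given $h$ satisfying~\eqref{f}, for each integer $M \ge 1$ let $g_M$ denote the $M$-th partial Fourier sum of $h$ (which has no constant term since $\int h = 0$), and set $e_M := h - g_M$. By Parseval, $\|g_M\| \le \|h\|$ and $\|e_M\| \to 0$ as $M \to \infty$. Applying Lemma~\ref{lemma1} to the trigonometric polynomial $g_M$ and using $\sqrt{2p(1-p)} \le 1/\sqrt{2}$ gives, $\p$-almost surely,
$$
\limsup_{N \to \infty} \frac{\left|\sum_{k=1}^N \xi_k \sum_{\ell \in \mathcal{S}_k} g_M(\ell x)\right|}{\sqrt{N \log \log N}} \;\le\; \frac{\lambda}{\sqrt{2}}\,\|h\| \qquad \textrm{for almost all } x.
$$

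It remains to control the error $R_{M,N}(x) := \sum_{k=1}^N \xi_k \sum_{\ell \in \mathcal{S}_k} e_M(\ell x)$ and show that its contribution to the limsup vanishes as $M \to \infty$. Setting $T_k(x) := \sum_{\ell \in \mathcal{S}_k} e_M(\ell x)$ and $L_N := \bigcup_{k \le N} \mathcal{S}_k$, decompose
$$
R_{M,N}(x) = W_N(x) + p\,Y_N(x), \qquad W_N(x) := \sum_{k=1}^N (\xi_k - p)\,T_k(x), \qquad Y_N(x) := \sum_{\ell \in L_N} e_M(\ell x).
$$
For the deterministic term $Y_N(x)$, Koksma's inequality (Lemma~\ref{koksinequ}) applied to the initial segment $\{1,\dots,\lambda \Psi(r(N))\}$ together with Lemma~\ref{lemmadiscr} yields $O(V(e_M)(\log N)(\log\log N)^{1+\ve})$ for a.e.~$x$; the difference between $L_N$ and that initial segment can be absorbed using the same Koksma-type estimate on $O(1)$ additional pieces. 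Hence $|Y_N(x)| = o(\sqrt{N\log\log N})$ for a.e.~$x$ and is negligible.

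For $W_N(x)$, fix $x$ and view it as a sum of independent mean-zero random variables $(\xi_k - p)T_k(x)$ with conditional variances $p(1-p)T_k(x)^2$ and uniformly bounded by $\lambda \|e_M\|_\infty$. The Kolmogorov LIL for independent summands then gives, $\p$-almost surely,
$$
\limsup_N \frac{|W_N(x)|}{\sqrt{2 B_N(x)\log\log B_N(x)}} \le 1, \qquad B_N(x) := p(1-p)\sum_{k=1}^N T_k(x)^2.
$$
A direct Parseval computation gives $\int_0^1 \sum_{k=1}^N T_k(x)^2\,dx = O(\lambda^2 N \|e_M\|^2)$, and a G\'al--Koksma-type almost-sure convergence argument along the subsequence $N = \Psi(r)$, with interpolation to general $N$ via a maximal inequality, transfers this to the pointwise bound $\sum_{k=1}^N T_k(x)^2 \le C\lambda^2 N\|e_M\|^2$ for a.e.~$x$. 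Consequently $\limsup_N |W_N(x)|/\sqrt{N\log\log N} = O(\lambda\|e_M\|)$, and letting $M \to \infty$ so that $\|e_M\| \to 0$ completes the proof.

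The main obstacle is the pointwise control of $\sum_{k=1}^N T_k(x)^2$ by a constant times $N\|e_M\|^2$ for almost every $x$. The naive bound $\int T_k(x)^2\,dx \le \lambda^2\|e_M\|^2$ (via Cauchy--Schwarz on the $\lambda^2$ cross integrals of dilates) is wasteful by a factor of $\lambda$ compared to the ``heuristically independent'' value $\lambda\|e_M\|^2$; a sharp constant would require exploiting the arithmetic progression structure of $\mathcal{S}_k$ with step $r$ to show that $\int e_M((a+jr)x)\,e_M((a+j'r)x)\,dx \to 0$ as $r \to \infty$ for $j \ne j'$. Fortunately, since Lemma~\ref{lemma2} only asks for an upper bound and we ultimately send $\|e_M\| \to 0$, any bound of the form $C\lambda^s\|e_M\|$ with $s$ fixed is sufficient, so the crude estimate already suffices.
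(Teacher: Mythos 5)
Your overall strategy --- truncate the Fourier series of $h$, invoke Lemma~\ref{lemma1} for the polynomial part $g_M$, and absorb the tail $e_M=h-g_M$ --- is workable in principle, but it is more roundabout than necessary, and the one step you leave vague is exactly the step that carries all the weight. The paper does not pass through Lemma~\ref{lemma1} at all: it bounds $G_k(x)^2=\bigl(\sum_{\ell\in\mathcal{S}_k}h(\ell x)\bigr)^2\le\lambda\sum_{\ell\in\mathcal{S}_k}h(\ell x)^2$ \emph{pointwise} by Cauchy--Schwarz, uses~\eqref{deltaapprox} to identify $\bigcup_{k\le N}\mathcal{S}_k$ with $\{1,\dots,\lambda N\}$ up to $o(\log N)$ elements, and then applies Koksma's inequality to the bounded-variation function $h^2$ together with Lemma~\ref{lemmadiscr} to get $\frac1N\sum_{k\le N}\V X_k\le(1+o(1))\frac{\lambda^2}{4}\|h\|^2$ for a.e.\ $x$; Kolmogorov's LIL then produces the constant $\sqrt{2\cdot\lambda^2/4}=\lambda/\sqrt2$ directly, with no Fourier truncation.

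The genuine gap in your argument is the passage from $\int_0^1\sum_{k\le N}T_k(x)^2\,dx=O(\lambda^2N\|e_M\|^2)$ to the \emph{pointwise} bound $\sum_{k\le N}T_k(x)^2\le C\lambda^2N\|e_M\|^2$ for a.e.\ $x$. An $L^1$ bound on a nonnegative quantity never yields an almost-everywhere upper bound with a uniform constant (Chebyshev only controls the measure of the exceptional set), and an actual G\'al--Koksma theorem would require you to estimate the variance of $\sum_{k\le N}T_k^2$, i.e.\ fourth-moment or quasi-orthogonality computations that you have not supplied and that are not routine here. The fix is to make your Cauchy--Schwarz step pointwise rather than integrated: $T_k(x)^2\le\lambda\sum_{\ell\in\mathcal{S}_k}e_M(\ell x)^2$, hence $\sum_{k\le N}T_k(x)^2\le\lambda\sum_{m\le\lambda N}e_M(mx)^2+O(\log N)$ by~\eqref{deltaapprox}, and since $e_M^2$ has bounded variation, Koksma's inequality plus Lemma~\ref{lemmadiscr} gives $\frac{1}{\lambda N}\sum_{m\le\lambda N}e_M(mx)^2\to\|e_M\|^2$ for a.e.\ $x$. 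Once you write this down you will see it works verbatim with $e_M$ replaced by $h$ itself, which collapses your proof to the paper's direct argument and makes the decomposition $h=g_M+e_M$ superfluous. Two smaller points: before invoking Kolmogorov's LIL you should dispose of the case where $B_N(x)$ stays bounded (then the limsup is $0$), and you should note that $\|e_M\|_\infty$ and $\textup{Var}_{[0,1]}e_M$ need not be uniformly bounded in $M$ --- harmless since $M$ is fixed before $N\to\infty$, but worth making explicit.
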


The functions $\mathbf{I}$ appearing in the next lemma were defined in \eqref{ia}.
\begin{lemma} \label{lemma3}
For any fixed $L \geq 1$ we have, $\p$-almost surely, that
$$
\limsup_{N \to \infty} ~\max_{s = 0, \dots, 2^L-1} ~ \sup_{0 \leq a \leq 2^{-L}} \frac{\left| \sum_{k=1}^N \xi_k \sum_{\ell \in \mathcal{S}_k} \mathbf{I}_{[s2^{-L},s2^{-L}+a]} (\ell x) \right|}{\sqrt{N \log \log N}} \leq  5 \lambda^{3/2} \sqrt{2^{-L}}
$$
for almost all $x$.
\end{lemma}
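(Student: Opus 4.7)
The plan is to combine Lemma~\ref{lemma2}, which provides an LIL for the centered indicator of a single fixed interval, with a dyadic chaining argument to obtain a bound that is \emph{uniform} over the continuous two-parameter family of intervals $\{[s 2^{-L}, s 2^{-L}+a]: 0 \le s < 2^L,\ 0 \le a \le 2^{-L}\}$.

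First I partition $[0,1]$ into dyadic cells: for every integer $j \ge 0$ let $\mathcal{D}_j$ denote the family of $2^{L+j}$ intervals $[t 2^{-L-j}, (t+1) 2^{-L-j}]$ with $0 \le t < 2^{L+j}$. Each $\mathbf{I}_J$ with $J \in \bigcup_j \mathcal{D}_j$ satisfies~\eqref{f} and has $\|\mathbf{I}_J\|^2 \leq |J|$, so applying Lemma~\ref{lemma2} to each one and taking the countable union of the exceptional null sets yields, $\mathbb{P}$-almost surely and for almost every $x$,
$$
\limsup_{N \to \infty} \frac{\bigl| \sum_{k=1}^N \xi_k \sum_{\ell \in \mathcal{S}_k} \mathbf{I}_J(\ell x) \bigr|}{\sqrt{N \log \log N}} \;\leq\; \frac{\lambda}{\sqrt{2}} \sqrt{|J|}
\quad\text{for every dyadic } J.
$$
For an arbitrary $a \in [0, 2^{-L}]$, expand $a = \sum_{j \geq 1} \varepsilon_j 2^{-L-j}$ in binary and truncate at level $J = J(N)$ to be chosen; the interval $[s 2^{-L}, s 2^{-L} + a_{J(N)}]$, with $a_{J(N)} = \sum_{j=1}^{J(N)} \varepsilon_j 2^{-L-j}$, decomposes as a disjoint union of at most one element of $\mathcal{D}_j$ per level $j \leq J(N)$. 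By additivity of $\mathbf{I}$ over disjoint sets, the sum $\sum_k \xi_k \sum_\ell \mathbf{I}_{[s 2^{-L}, s 2^{-L} + a_{J(N)}]}(\ell x)$ splits into at most $J(N)$ summands, each controllable by the dyadic estimate above.

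Since the pointwise limsup bounds above do not by themselves yield a uniform-in-$J$, uniform-in-$N$ bound for the maximum over all dyadic cells, I will strengthen them to a finite-$N$ maximal inequality by applying a Bernstein-type concentration inequality to $\sum_k \xi_k \sum_\ell \mathbf{I}_J(\ell x)$ (a sum of independent random variables in $\xi$, each bounded by $\lambda$ in absolute value), combined with a union bound over the $2^{L+j}$ cells in $\mathcal{D}_j$ and a Borel--Cantelli argument along a geometric subsequence $N_r = \lfloor \theta^r \rfloor$ with $\theta > 1$. The truncation error, coming from the residual interval $[s 2^{-L} + a_{J(N)}, s 2^{-L} + a]$ of length $\leq 2^{-L-J(N)}$, is controlled via monotonicity of the indicator together with the SLLN $\sum_k \xi_k / N \to p$, and becomes $o(\sqrt{N \log \log N})$ as soon as $J(N)$ exceeds $\log_2 \bigl( \lambda \sqrt{N / \log \log N} \bigr)$. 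Summing the resulting bounds geometrically over $j = 1, \ldots, J(N)$ then yields the desired uniform estimate.

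The main technical obstacle is to balance the three contributions---the Lemma~\ref{lemma2} dyadic estimate $\sim \lambda \sqrt{|J|}$ at each level, the Bernstein--union-bound penalty $\sim \lambda \sqrt{L+j}$ at each level, and the truncation error---so that the total sum converges to a constant multiple of $\lambda^{3/2} \sqrt{2^{-L}}$ and does not acquire spurious factors such as $\sqrt{\log N}$ or $\log \log N$. The extra $\sqrt{\lambda}$ factor in the final constant (compared with the $\lambda$ of Lemma~\ref{lemma2}) is expected to emerge from the union bound over the dyadic cells at each level.
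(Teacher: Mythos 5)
Your route is genuinely different from the paper's. The paper does not chain over dyadic scales at all: following Fukuyama, it fixes one element $s_k^{(J)}$ of each block $\mathcal{S}_k$, sorts the points $\langle s_k^{(J)}x\rangle$ falling in $[0,2^{-L}]$ by size so that $\sup_{0\le a\le 2^{-L}}$ becomes a maximum over initial segments of the rearranged sum of $(\xi_{k_j}-\E\xi_{k_j})$, proves a L\'evy-type maximal inequality for the resulting two-parameter maximum via the median of the binomial distribution, and then applies the maximal Bernstein inequality \emph{once}, the factor $\sqrt{2^{-L}}$ coming from the fact that only $M\le 2^{-L+1}\lambda N$ points land in $[0,2^{-L}]$. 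Your chaining scheme is a legitimate alternative (essentially Philipp's classical method for discrepancy LILs), and it does close: the threshold forced at level $j$ by Bernstein plus the union bound over the $2^{L+j}$ cells is $c_j\approx \tfrac{\lambda}{\sqrt2}\,2^{-(L+j)/2}\bigl(1+\tfrac{(L+j)\log 2}{\log\log N}\bigr)^{1/2}$, and $\sum_{j\ge 1}c_j$ converges geometrically to $O(\lambda\sqrt{2^{-L}})$, comfortably inside the budget $5\lambda^{3/2}\sqrt{2^{-L}}$. Your expectation that the union bound produces the extra $\sqrt{\lambda}$ is not borne out; in the paper that factor comes from H\"older over the $\lambda$ elements of $\mathcal{S}_k$ (i.e.\ $G_k^2\le\lambda\sum_{\ell\in\mathcal{S}_k}(\cdots)^2$), an inequality your variance bound needs as well.

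That said, the proposal stops exactly at the decisive step: the ``balancing of the three contributions'' you defer \emph{is} the proof, and until the Bernstein exponent is compared with $(L+j)\log 2+(1+\ve)\log\log N$ and the resulting $c_j$ summed over $1\le j\le J(N)$ --- including the regime $j\gtrsim\log\log N$ where the union-bound term dominates and the regime $j$ near $\log_2 N$ where the error term in the variance estimate dominates --- nothing is established. Three concrete points need repair. First, the variance of $\sum_k(\xi_k-\E\xi_k)\sum_{\ell\in\mathcal{S}_k}\mathbf{I}_J(\ell x)$ must be bounded by $\tfrac14\lambda^2N|J|+O(\log N(\log\log N)^2)$ \emph{uniformly over all dyadic cells at all levels simultaneously}; this comes from Lemma~\ref{lemmadiscr} together with Koksma's inequality (the error being uniform in $J$), not from the SLLN for $\sum_k\xi_k$, which plays no role here. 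Second, the truncation claim is off: with $J(N)=\log_2(\lambda\sqrt{N/\log\log N})$ the residual interval of length $2^{-L-J(N)}$ still captures about $2^{-L}\sqrt{N\log\log N}$ of the points $(\langle mx\rangle)_{m\le\lambda N}$, so the error is $O(2^{-L}\sqrt{N\log\log N})$ rather than $o(\sqrt{N\log\log N})$; you must push $J(N)$ up to about $\log_2 N$ (which your geometric sum tolerates) or absorb an extra $2^{-L}$ into the constant, as the paper does for its term \eqref{term2}. Third, Borel--Cantelli along $N_r=\lfloor\theta^r\rfloor$ controls only the subsequence; to bound the limsup over \emph{all} $N$ you need a maximal inequality in $n$ inside each block (the maximal form of Bernstein, as the paper uses, or an Ottaviani/L\'evy argument), which the proposal never invokes.
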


\begin{proof}[Proof of Lemma~\ref{lemma1}:] 
Let $g(x)$ be a trigonometric polynomial without constant term. We may assume that $g \not\equiv 0$, since otherwise the Lemma is trivial. For simplicity of writing we assume that $g$ is an even function (in other words, that it consists only of cosine-terms); the proof in the general case is exactly the same. We write
$$
g(x) = \sum_{j=1}^d a_j \cos 2 \pi j x,
$$
and define
\begin{equation} \label{ci}
G_k(x) = \sum_{\ell \in \mathcal{S}_k} g (\ell x)
\end{equation}
and
$$
X_k = X_k(\omega,x) = \xi_k (\omega) G_k(x).
$$
Note that for every fixed $x$ the sequence $X_1, X_2, \dots$ is a sequence of independent random variables on $(\Omega,\mathcal{A},\p)$. We clearly have
$$
\E X_k = p G_k(x)
$$
and
$$
\V X_k = p (1-p) G_k(x)^2.
$$
For $n \geq 1$ we have
$$
\mu \left( x \in [0,1]:~|\sin \lambda \pi n x| \leq n^{-2} \right) = \mu \left( x \in [0,1]:~|\sin \pi x| \leq n^{-2} \right) \leq \frac{2}{\pi n^2},
$$
where $\mu$ denotes the Lebesgue measure. The term on the right-hand side of this inequality is summable in $n$. Consequently by the Borel--Cantelli lemma we have
$$
|\sin \lambda \pi n x| > n^{-2} \quad \textrm{eventually}, \qquad \textrm{for almost all $x \in [0,1]$.}
$$
In other words, there  exists a positive function $s(x)$ such that for almost all $x$ we have
\begin{equation*} \label{kpi}
|\sin \lambda \pi n x| > \frac{s(x)}{n^2} \qquad \textrm{for all $n \geq 1$}.
\end{equation*}
Thus for almost all $x$ we also have
\begin{equation} \label{sinlam}
\left( \frac{\sin \lambda \pi n x}{\sin \pi n x} \right)^2 \geq (\sin \lambda \pi n x)^2 \geq \frac{s(x)^2}{n^4}, \qquad \textrm{for all $n \geq 1$}.
\end{equation}
Using the classical formula for a sum of trigonometric functions having frequencies along an arithmetic progression, for $k \in (\Psi(r-1), \Psi(r)]$ for some $r$ we obtain the identity
$$
G_k(x) = \sum_{j=1}^d a_j \frac{\sin \lambda \pi r j x}{\sin \pi r j x} \cos \left(\left( 2 \lambda \Psi(r-1)+2 \lambda \nu r + 2 \rho + (\lambda-1) r \right) \pi j x\right),
$$
where $\nu=\nu(k)$ and $\rho=\rho(k)$ are as in~\eqref{kform}. Furthermore, also for $\Psi(r-1) < k \leq \Psi(r)$ for some $r$, we have
$$
G_k(x)^2 = \sum_{j=1}^d \frac{a_j^2}{2}  \left(\frac{\sin \lambda \pi r j x}{\sin \pi r j x}\right)^2 + C_k(x),
$$
where, writing $\cos_k[m]=\cos \left(\left( 2\lambda \Psi(r-1)+2 \lambda \nu r + 2 \rho + (\lambda-1) r \right) \pi m x\right)$, the function $C_k(x)$ is defined as
\begin{eqnarray*}
C_k(x) & = & \sum_{\substack{1 \leq j_1,j_2 \leq d,\\j_1 \neq j_2}} \frac{a_{j_1}a_{j_2}}{2} \left(\frac{\sin \lambda \pi r j_1 x}{\sin \pi r j_1 x}\right) \left(\frac{\sin \lambda \pi r j_2 x}{\sin \pi r j_2 x}\right) \cos_k[j_1-j_2] \\
& & \quad + \sum_{1 \leq j_1,j_2 \leq d} \frac{a_{j_1} a_{j_2}}{2} \left(\frac{\sin \lambda \pi r j_1 x}{\sin \pi r j_1 x}\right) \left(\frac{\sin \lambda \pi r j_2 x}{\sin \pi r j_2 x}\right)  \cos_k[j_1+j_2].
\end{eqnarray*}
Note that we have
\begin{equation} \label{sinfrac}
\left(\frac{\sin \lambda \pi r j x}{\sin \pi r j x}\right)^2 \leq \lambda^2, \qquad \textrm{for all $r \geq 1$ and $1 \leq j \leq d$.}
\end{equation}
Let $\ve>0$ be fixed. Using the orthogonality of the trigonometric system it is easy to show that for any fixed $j_1 \neq j_2$ the sum 
$$
\sum_{k=1}^{N} \cos \left(\left( 2 \lambda \Psi(r-1)+2 \lambda \nu r + 2 \rho + (\lambda-1) r \right) \pi (j_1 - j_2) x\right)
$$
is of order $\mathcal{O}(N^{1/2+\ve})$ for almost all $x$ as $N \to \infty$ (note that all frequencies in this sum are different, for different values of $k$). The same conclusion holds if $(j_1 - j_2)$ is replaced by $(j_1 + j_2)$, where the condition $j_1 \neq j_2$ is not necessary. Consequently, we also have that for almost all $x$
$$
\sum_{k=1}^{N} C_k(x) = \mathcal{O}(N^{1/2+\ve}) \qquad \textrm{as $N \to \infty$}.
$$
Consequently, by~\eqref{sinfrac}, for any (fixed) $\ve>0$ we have for almost all $x$ that
\begin{equation} \label{sumvxk}
\sum_{k=1}^N \V X_k \leq (1+\ve) \lambda^2 p (1-p) \|g\|^2 N
\end{equation}
for sufficiently large $N$. Furthermore, writing 
\begin{equation} \label{vr}
V_r = \sum_{k=\Psi(r-1)+1}^{\Psi(r)} \V X_k(x)^2, \qquad r \geq 1,
\end{equation}
for almost all $x$ we have
\begin{equation} \label{variancesum}
V_r = (\Psi(r)-\Psi(r-1)) p (1-p)  \sum_{j=1}^d \frac{a_j^2}{2}  \left(\frac{\sin \lambda \pi r j x}{\sin \pi r j x}\right)^2 + \mathcal{O}\left( (\Psi(r))^{1/2+\ve} \right)
\end{equation}
as $r \to \infty$. Remember that $\Psi(r) \sim r^r$. Thus  by~\eqref{sinlam} for almost all $x$ we have
\begin{equation} \label{varianceinbr}
V_r \geq (1-\ve) p(1-p) \|g\|^2 (\Psi(r)-\Psi(r-1)) \frac{s(x)^2}{d^4 r^4}
\end{equation}
for all sufficiently large $r$, which implies that for almost all $x$ 
\begin{equation} \label{varbound}
\sum_{k=1}^{\Psi(r)} \V (X_k) \to \infty \qquad \textrm{as} \qquad r \to \infty.
\end{equation}
Now~\eqref{varbound}, together with the fact that the random variables $X_k$ are uniformly bounded (we have $|X_k| \leq \lambda \|g\|_\infty$), implies that by Kolmogorov's law of the iterated logarithm for almost all $x$ we have
$$
\limsup_{N \to \infty} \frac{\left| \sum_{k=1}^N (X_k - \E X_k) \right|}{\sqrt{2 \left( \V(X_1) + \dots + \V(X_N) \right) \log \log \left( \V (X_1) + \dots + \V (X_N)\right)}} = 1 \qquad \textup{almost surely.}
$$
By~\eqref{sumvxk} we can conclude that for almost all $x$
\begin{equation} \label{upperb}
\limsup_{N \to \infty} \frac{\left| \sum_{k=1}^N  (X_k - \E X_k) \right|}{\sqrt{N \log \log N}} \leq \sqrt{2 (1+\ve) p (1-p)} \lambda \|g\| \qquad \textup{almost surely.}
\end{equation}
This establishes the upper bound in the LIL.\\

Next we want to apply the Berry--Esseen theorem to get the lower bound in the LIL in Lemma~\ref{lemma1}. We have
\begin{eqnarray*}
\E (|X_k - \E X_k |^3 ) & \leq & |G_k(x)|^3 ~\underbrace{\E (|\xi_k - \E \xi_k |^3)}_{\leq 1} \\
& \leq & \lambda \|g\|_\infty G_k(x)^2,
\end{eqnarray*}
which together with~\eqref{sinfrac} and~\eqref{variancesum} implies that for almost all $x$ we have
\begin{equation} \label{thirdmo}
\sum_{k = \Psi(r-1)+1}^{\Psi(r)} \E (|X_k - \E X_k |^3 ) = \mathcal{O} \left(  \Psi(r)-\Psi(r-1) \right) \quad \textrm{as $r \to \infty$.}
\end{equation}
By the Berry--Esseen theorem there exist an absolute constant $c_{\textup{abs}}$ such that for all $y \in \R$ we have
\begin{equation} \label{berryess}
\left| \p \left(\sum_{k=\Psi(r-1)+1}^{\Psi(r)} (X_k - \E X_k) \leq y \sqrt{V_r} \right) - \Phi(y) \right| \leq c_{\textup{abs}} \frac{\sum_{k=\Psi(r-1)+1}^{\Psi(r)} \E (|X_k - \E X_k |^3 )}{V_r^{3/2}},
\end{equation}
where $\Phi$ denotes the standard normal distribution and $V_r$ was defined in~\eqref{vr}. By~\eqref{varianceinbr} and~\eqref{thirdmo} the expression on the right-hand side of~\eqref{berryess} is for almost all $x$ of order
\begin{equation} \label{errorterm}
\mathcal{O} \left( \frac{r^6}{(\Psi(r)-\Psi(r-1))^{1/2}} \right) \quad = \qquad \mathcal{O} \left(r^{-r/2+6}\right) \qquad \textrm{as $r \to \infty$.} 
\end{equation}
For some fixed $\ve>0$ we set, for $r \geq 1$, 
$$
p_r = \p \left( \left| \sum_{k=\Psi(r-1)+1}^{\Psi(r)} (X_k - \E X_k)  \right| \geq \sqrt{2 (1-\ve) V_r \log r} \right)
$$
and
$$
q_r =\p \left( \left| \sum_{k=\Psi(r-1)+1}^{\Psi(r)} (X_k - \E X_k)  \right| \geq \sqrt{2 (1-\ve)^3  \|g\|^2 \lambda^2 \Psi(r) p (1-p) \log \log \Psi(r)} \right).
$$
Then by~\eqref{berryess} and~\eqref{errorterm} for almost all $x$ we have 
\begin{eqnarray} \label{prtilde}
p_r & = & \underbrace{2 - 2 \Phi\left(\sqrt{2 (1-\ve) \log r} \right)}_{=: \tilde{p}_r} + \mathcal{O} \left(r^{-r/2+6}\right) \qquad \textrm{as $r \to \infty$},
\end{eqnarray}
which by standard estimates for the tail probabilities of the normal distribution implies
\begin{equation} \label{sumardiv}
\sum_{r=1}^\infty \tilde{p}_r = \infty.
\end{equation}
Now we define sets $A \subset [0,1]$ and $A_r \subset [0,1],~r \geq 1$, in the following way: We set
$$
A = \left\{x \in (0,1):~ \sum_{j=1}^d \frac{a_j^2}{2} \left( \frac{\sin \lambda \pi j x}{\sin \pi j x} \right)^2 \geq \|g\|^2 \lambda^2 (1-\ve) \right\},
$$
and, for any $r \geq 1$,
$$
A_r = \left\{ x \in [0,1]:  \langle r x \rangle \in A \right\}.
$$
Note that $A$ is the union of finitely many intervals. Thus by Lemma~\ref{lemmaunif} we have
\begin{equation} \label{sumR}
\frac{1}{R} \sum_{r=1}^R \mathds{1} (x \in A_r) = \frac{1}{R} \sum_{r=1}^R \mathds{1}_A (\langle r x \rangle) \to \mu(A), \qquad \textrm{for almost all $x$, ~as $R \to \infty$,}
\end{equation}
where again $\mu$ denotes the Lebesgue measure. Consequently, for almost all $x$ the set $\{r \geq 1:~x \in A_r\}$ has positive asymptotic density. By~\eqref{variancesum} and using the facts that $\Psi(r-1) = o(\Psi(r))$ and $\Psi(r) \sim r^r$ as $r \to \infty$, for almost all $x$ we have
\begin{equation} \label{Vrlower}
V_r  \log r \geq \|g\|^2 \lambda^2 (1-\ve)^2 \Psi(r) p (1-p) \log \log \Psi(r) 
\end{equation}
whenever $x \in A_r$, ~for all sufficiently large $r$. Thus for almost all $x$ we have
\begin{eqnarray} \label{qrpr}
q_r \geq p_r \cdot \mathds{1}_{A_r} (x) 
\end{eqnarray}
for sufficiently large $r$. Note that $(\tilde{p}_r)_{r \geq 1}$ is a non-increasing sequence of non-negative real numbers. Thus~\eqref{sumardiv},~\eqref{sumR} and Lemma~\ref{lemmadensity} imply that for almost all~$x$ we have
$$
\sum_{r=1}^\infty \tilde{p}_r \cdot \mathds{1}_{A_r} (x) = \sum_{r \geq 1:~r \in A_r} \tilde{p}_r = \infty.
$$
By~\eqref{prtilde} this implies that for almost all $x$ we also have
$$
\sum_{r \geq 1:~r \in A_r} p_r = \infty,
$$
and consequently by~\eqref{qrpr}, also for almost all $x$, we have
$$
\sum_{r=1}^\infty q_r = \infty.
$$
The sets used for the definition of $q_r$ are obviously independent for different values of~$r$. Consequently for almost all $x$, by the second Borel--Cantelli lemma, $\p$-almost surely infinitely many events
$$
\left| \sum_{k=\Psi(r-1)+1}^{\Psi(r)} (X_k - \E X_k)  \right| \geq  \sqrt{2 (1-\ve)^3  \lambda^2 \|g\|^2 \Psi(r) p (1-p) \log \log \Psi(r)}
$$
occur. We clearly have
\begin{eqnarray*}
& & \limsup_{N \to \infty} \frac{\left| \sum_{k=1}^N (X_k - \E X_k) \right|}{\sqrt{2 N \log \log N}} \\
 & \geq & \limsup_{r \to \infty} \frac{\left| \sum_{k=1}^{\Psi(r)} (X_k - \E X_k) \right|}{\sqrt{2 \Psi(r) \log \log \Psi(r)}} \\
& \geq & \limsup_{r \to \infty} \frac{\left| \sum_{k=\Psi(r-1)+1}^{\Psi(r)} (X_k - \E X_k) \right|}{\sqrt{2 \Psi(r) \log \log \Psi(r)}} - \underbrace{\limsup_{r \to \infty} \frac{\left| \sum_{k=1}^{\Psi(r-1)} (X_k - \E X_k) \right|}{\sqrt{2 \Psi(r) \log \log \Psi(r)}}.}_{\textrm{$= 0$ a.s. for a.e. $x$ by~\eqref{upperb}, since $\Psi(r-1) = o (\Psi(r))$.}}
\end{eqnarray*}
Thus we have for almost all $x$
$$
\limsup_{N \to \infty} \frac{\left| \sum_{k=1}^N (X_k - \E X_k) \right|}{\sqrt{N \log \log N}} \geq \sqrt{2 (1-\ve)^3 p(1-p)} \lambda \|g\| \qquad \textup{$\p$-almost surely}
$$
Since $\ve>0$ was arbitrary, together with~\eqref{upperb} we have shown that for almost all $x$ we have
$$
\limsup_{N \to \infty} \frac{\left| \sum_{k=1}^N (X_k - \E X_k) \right|}{\sqrt{N \log \log N}} = \sqrt{2 p(1-p)} \lambda \|g\| \qquad \textup{$\p$-almost surely.}
$$
By Fubini's theorem this means that $\p$-almost surely we have
\begin{equation} \label{concl}
\limsup_{N \to \infty} \frac{\left| \sum_{k=1}^N (X_k - \E X_k) \right|}{\sqrt{N \log \log N}} = \sqrt{2 p(1-p)} \lambda \|g\| \qquad \textup{for almost all $x$.}
\end{equation}
This is almost the conclusion of the lemma, except that in~\eqref{concl} we have $X_k - \E X_k$ instead of $X_k$. Clearly $\E X_k = p \sum_{k \in \mathcal{S}_k} g(\ell x)$. By construction the sets ($\mathcal{S}_k)_{k \geq 1}$ are interlaced in such a way that we have the following fact: for given $N \geq 1$ and for $r$ such that $N \in (\Psi(r-1),\Psi(r)]$, we have
\begin{equation} \label{deltaapprox}
\# \left\{ \left( \bigcup_{k=1}^N \mathcal{S}_k \right) ~\Delta~ \{1, \dots, \lambda N\} \right\} \quad \leq \quad r \quad = \quad o \left(\log (N)\right),
\end{equation}
where $\Delta$ denotes the symmetric difference. Thus we have
\begin{eqnarray*}
\limsup_{N \to \infty} \frac{\left| \sum_{k=1}^N \E X_k \right|}{\sqrt{N \log \log N}} = \limsup_{N \to \infty} \frac{p \left| \sum_{k=1}^{\lambda N} g(k x)  \right|}{\sqrt{N \log \log N}} = 0 \qquad \textrm{for almost all $x$,}
\end{eqnarray*}
by Lemma~\ref{lemmadiscr} and Lemma~\ref{koksinequ}. Together with~\eqref{concl} this proves Lemma~\ref{lemma1}.
\end{proof}

\begin{proof}[Proof of Lemma~\ref{lemma2}:]
We start the proof of Lemma~\ref{lemma2} similar to that of Lemma~\ref{lemma1}, with the function $g$ replaced by $h$. For $k \geq 1$ we set
$$
G_k(x) = \sum_{\ell \in \mathcal{S}_k} h(\ell x)
$$
and
$$
X_k = X_k (\omega, x) = \xi_k (\omega) G_k(x). 
$$
Using H\"older's inequality we get
\begin{eqnarray*}
G_k(x)^2 \leq \lambda \sum_{\ell \in \mathcal{S}_k} h(\ell x)^2, \label{vxicase2}
\end{eqnarray*}
which implies
\begin{equation} \label{vrep}
\V X_k \leq \frac{\lambda}{4} \sum_{\ell \in \mathcal{S}_k} h(\ell x)^2.
\end{equation}
Using~\eqref{deltaapprox} we have
\begin{equation} \label{hrep}
\sum_{k=1}^{N} \sum_{\ell \in \mathcal{S}_k} h(\ell x)^2 = \sum_{k=1}^{\lambda N} h (k x)^2 + \mathcal{O} (\log N) \qquad \textrm{as $N \to \infty$.}
\end{equation}
Since by assumption $h$ has bounded variation on $[0,1]$, the same is true for $h^2$ (by the well-known fact that the product of two functions of bounded variation is also of bounded variation). By Lemma~\ref{lemmadiscr} and Lemma~\ref{koksinequ} we have
$$
\lim_{N \to \infty} \frac{\sum_{k=1}^N h(k x)^2}{N} = \int_0^1 h(x)^2~dx = \|h\|^2
$$
for almost all $x$. Together with~\eqref{vrep} and~\eqref{hrep} this implies
\begin{equation} \label{vxicase3}
\limsup_{N \to \infty} \frac{\sum_{k=1}^N \V X_k}{N} \leq \frac{\lambda^2}{4} \|h\|^2.
\end{equation}
for almost all $x$. In the sequel we write 
$$
B_N = \sum_{k=1}^N \V X_k, \qquad N \geq 1.
$$
If for a certain value of $x$ we have $\lim_{N \to \infty} B_N < \infty$, then for this $x$ we clearly have
$$
\limsup_{N \to \infty} \frac{\left| \sum_{k=1}^N (X_k - \E X_k) \right|}{\sqrt{N \log \log N}} = 0 \qquad \textup{a.s.}
$$
On the other hand, if for a certain $x$ we have $B_N \to \infty$, then by Kolmogorov's law of the iterated logarithm for uniformly bounded, independent random variables we have
$$
\limsup_{N \to \infty} \frac{\left| \sum_{k=1}^N (X_k - \E X_k) \right|}{\sqrt{ 2 B_N \log \log B_N}} = 1 \qquad \textup{a.s.}
$$
Together with~\eqref{vxicase3} and applying Fubini's theorem this implies that $\p$-almost surely we have
$$
\limsup_{N \to \infty} \frac{\left| \sum_{k=1}^N (X_k - \E X_k) \right|}{\sqrt{N \log \log N}} \leq \frac{\lambda}{\sqrt{2}} \|h\| \qquad \textup{for almost all $x$.}
$$
As in the proof of Lemma~\ref{lemma1} we can show that for almost all $x$.
$$
\limsup_{N \to \infty} \frac{\left| \sum_{k=1}^N \E X_k \right|}{\sqrt{N \log \log N}} = 0
$$
This proves Lemma~\ref{lemma2}.
\end{proof}

\begin{proof}[Proof of Lemma~\ref{lemma3}:] We use an argument similar to the one in~\cite[Lemma 4]{fuku1}. First we note that for any fixed $j \in \{0, \dots, 2^{L}-1\}$ we have
\begin{eqnarray}
& & \sup_{0 \leq a \leq 2^{-L}} \left| \sum_{k=1}^N (\xi_k - \E \xi_k) \sum_{\ell \in \mathcal{S}_k}\mathbf{I}_{[j2^{-L},j2^{-L}+a]} (\ell x) \right| \nonumber\\
& \leq & \sup_{0 \leq a \leq 2^{-L}} \left| \sum_{k=1}^N (\xi_k - \E \xi_k) \sum_{\ell \in \mathcal{S}_k} \mathds{1}_{[j2^{-L},j2^{-L}+a]} (\langle \ell x\rangle) \right| \label{term1}\\
& & \quad + \sup_{0 \leq a \leq 2^{-L}} \left|\lambda a \sum_{k=1}^N (\xi_k - \E \xi_k) \right| \label{term2}.
\end{eqnarray}

To estimate~\eqref{term1}, let $j \in \{0, \dots, 2^L-1\}$ be fixed. For simplicity of writing, we will assume that $j=0$; the proof in the other cases is exactly the same. Let $J \in \{1, \dots, \lambda\}$ be a fixed number, and write $s_k^{(J)}$ for the $J$-th element of $\mathcal{S}_k$. Assume that $N \geq 1$ is given. We set
$$
A_N = \max_{1 \leq n \leq N} ~\sup_{0 \leq a \leq 2^{-L}}~ \sum_{k=1}^n (\xi_{k} - \E \xi_{k}) \mathds{1}_{[0,a]} \left( \left\langle s_k^{(J)} x \right\rangle \right).
$$
Let $M$ denote the cardinality of the set of those numbers from $(\langle s_k^{(J)} x \rangle)_{1 \leq k \leq N}$, which are contained in the interval $[0,2^{-L}]$. We assume that $x$ is irrational. Then we can define indices $k_1 < \dots < k_M$ such that 
$$
\left\{s_{k_1}, \dots, s_{k_M} \right\} = \left\{k \leq N:~\left\langle s_k^{(J)} \right\rangle \in [0,a] \right\}
$$
and
$$
\left\langle s_{k_1}^{(J)} x \right\rangle < \dots < \left\langle s_{k_M}^{(J)}x \right\rangle.
$$
Furthermore, we define $\mathcal{T}_{m,n} = \{j \leq m:~k_j \leq n\}$ and $A_{m,n} = \sum_{k \in \mathcal{T}_{m,n}} (\xi_k-\E \xi_k)$. Then we have
$$
A_N = \max_{1 \leq n \leq N} ~\max_{1 \leq m \leq M}~ A_{m,n}.
$$
Let $y>1$ be a real number, to be determined later. We define random variables $\bar{n}$ and $\bar{m}$ by 
$$
\bar{n} = \min \left\{n:~\max_{1 \leq m \leq M} A_{m,n} > y \right\}, \qquad \bar{m} = \min \left\{m:~A_{m,\bar{n}}> y \right\}.
$$
Then, writing $C_{m,n}$ for the sets $\{\bar{n}=n,~\bar{m}=m\}$ we have a disjoint decomposition 
$$
\{A_N > y\} = \bigcup_{1 \leq n \leq N,~1 \leq m \leq M} C_{m,n}.
$$
The set $C_{m,n}$ belongs to the $\sigma$-field generated by $\xi_1, \dots, \xi_n$, and consequently it is independent of $A_{m,N}-A_{m,n}$, which only depends on $\xi_{n+1}, \dots, \xi_N$. It is known that the median of a random variable $X$ having binomial distribution $B(n,p)$ must always be one of numbers $\lfloor np \rfloor$ and $\lceil np \rceil$, which implies that $\p(X \geq np -1) \geq 1/2$ (see \cite{kaas}). Applying this to our situation we get $\p(A_{m,N} - A_{m,n} \geq -1) \geq 1/2$. Consequently we have
\begin{eqnarray*}
\p (C_{m,n}) & \leq & 2 \p \left(C_{m,n} \right) \p (A_{m,N}-A_{m,n} \geq -1) \\
& \leq & 2 \p \left(C_{m,n} \cap \{A_{m,N} > y-1\} \right) \\
& \leq & 2 \p \left( C_{m,n} \cap \left\{ \max_{r \leq M} A_{r,N} > y-1 \right\} \right).
\end{eqnarray*}
Summing over $m$ and $n$ we obtain
\begin{eqnarray}
\p (A_N > y) & \leq & 2 \p \left( A_N > y,~ \max_{r \leq M} A_{r,N} > y-1\right) \nonumber\\
& = & 2 \p \left( \max_{r \leq M} A_{r,N} > y-1 \right) \nonumber\\
& = & 2 \p \left(\max_{1 \leq r \leq M} \sum_{j=1}^r \mathds{1}_{[0,2^{-L}]} \left(\left\langle s_{k_j}^{(J)} \right\rangle \right)~(\xi_{k} - \E \xi_{k}) > y - 1 \right). \label{4p}
\end{eqnarray}
By the maximal version of Bernstein's inequality (see for example \cite[Lemma 2.2]{einmahl}), for any  independent, zero-mean random variables $Z_1, \dots, Z_M$ having variances $\sigma^2$ each and satisfying $|Z_k| \leq 1,~k \geq 1,$ we have
\begin{equation} \label{bernst}
\p \left(\max_{1 \leq r \leq M} \sum_{k=1}^r Z_k > t \right) \leq \exp \left(\frac{-t^2}{2 \sigma^2 M + 2t/3} \right), \qquad \textrm{for any $t > 0$.}
\end{equation}
By~\eqref{deltaapprox} we have $s_k^{(J)} \leq \lambda k + o (\log k)$ as $k \to \infty$. Consequently, by Lemma~\ref{lemmaunif}, for almost all $x$ for sufficiently large $N$ we have
$$
\sum_{k=1}^N \mathds{1}_{[0,2^{-L}]} \left( \left\langle s_k^{(J)} x \right\rangle \right) \leq \sum_{k=1}^{\lambda N+ \log N} \mathds{1}_{[0,2^{-L}]} (\langle k x \rangle) \leq 2^{-L+1} \lambda N.
$$
Combining this fact with~\eqref{bernst} we obtain the upper bound
$$
2 \exp \left(\frac{-(y-1)^2}{2^{-L+2} \lambda N+ 2y/3} \right)
$$
for the term in~\eqref{4p}. Using this estimate for $N={2^n}$ and $y = \sqrt{2 \lambda 2^n 2^{-L+2} \log \log 2^n}+1$, we get
$$
\sum_{n=1}^\infty \p \left( A_{2^n} > \sqrt{8 \lambda 2^n 2^{-L} \log \log 2^n} \right) < \infty.
$$
Consequently, by the Borel--Cantelli lemma, for almost all $x$ we have
$$
\limsup_{N \to \infty} \sup_{0 \leq a \leq 2^{-L}} \frac{\sum_{k=1}^N \mathds{1}_{[0,a]} \left(\left\langle s_k^{(J)} x \right\rangle \right)~(\xi_{k} - \E \xi_{k})}{\sqrt{N \log \log N}} \leq 4 \sqrt{\lambda 2^{-L}} \qquad \textup{almost surely}.
$$
Repeating the same argument with negative signs, we obtain
$$
\limsup_{N \to \infty} \sup_{0 \leq a \leq 2^{-L}} \frac{\left| \sum_{k=1}^N \mathds{1}_{[0,a]} \left(\left\langle s_k^{(J)} x \right\rangle \right)~(\xi_{k} - \E \xi_{k}) \right|}{\sqrt{N \log \log N}} \leq 4 \sqrt{\lambda 2^{-L}} \qquad \textup{almost surely}.
$$
This result holds independent of the choice of $J$. Since
$$
\sum_{k=1}^N (\xi_k - \E \xi_k) \sum_{\ell \in \mathcal{S}_k} \mathds{1}_{[0,a]} \left( \left\langle \ell x \right\rangle \right) = \sum_{J=1}^\lambda \sum_{k=1}^N (\xi_k - \E \xi_k) \mathds{1}_{[0,a]} \left( \left\langle s_k^{(J)} \right\rangle \right),
$$
this implies that for almost all $x$
$$
\limsup_{N \to \infty} \sup_{0 \leq a \leq 2^{-L}} \frac{\left| \sum_{k=1}^N (\xi_{k} - \E \xi_{k}) \sum_{\ell \in \mathcal{S}_k} \mathds{1}_{[0,a]} \left(\left\langle \ell x \right\rangle \right) \right|}{\sqrt{N \log \log N}} \leq 4 \lambda^{3/2} \sqrt{2^{-L}} \qquad \textup{almost surely}.
$$
For the term in \eqref{term2}, by the LIL for i.i.d. random variables we have
\begin{eqnarray*}
& & \limsup_{N \to \infty}  \sup_{0 \leq a \leq 2^{-L}} \frac{\left|\lambda a \sum_{k=1}^N (\xi_k - \E \xi_k) \right|}{\sqrt{N \log \log N}} \\
& \leq &  \lambda 2^{-L} \limsup_{N \to \infty}  \frac{\left|\sum_{k=1}^N (\xi_k - \E \xi_k) \right|}{\sqrt{N \log \log N}} \quad \leq \quad \frac{\lambda 2^{-L}}{\sqrt{2}} \qquad \textup{almost surely},
\end{eqnarray*}
where we used $\V \xi_k \leq 1/4$. This proves that for almost all $x$ we have
\begin{equation*} 
\limsup_{N \to \infty} ~\sup_{0 \leq a \leq 2^{-L}} \frac{\left| \sum_{k=1}^N (\xi_{k} - \E \xi_{k}) \sum_{\ell \in \mathcal{S}_k} \mathbf{I}_{[0,a]} \left(\left\langle \ell x \right\rangle \right) \right|}{\sqrt{N \log \log N}} \leq 5 \lambda^{3/2} \sqrt{2^{-L}} \qquad \textup{almost surely}.
\end{equation*}
As in the proof of Lemma~\ref{lemma1} and Lemma~\ref{lemma2} we can use Lemma~\ref{lemmadiscr} to show that for almost all $x$ we have
$$
\limsup_{N \to \infty} ~\sup_{0 \leq a \leq 2^{-L}} \frac{\left| \sum_{k=1}^N \E \xi_{k} \sum_{\ell \in \mathcal{S}_k} \mathbf{I}_{[0,a]} \left(\left\langle \ell x \right\rangle \right) \right|}{\sqrt{N \log \log N}} = 0.
$$
Thus for almost all $x$ we have
\begin{equation} \label{result}
\limsup_{N \to \infty} ~\sup_{0 \leq a \leq 2^{-L}} \frac{\left| \sum_{k=1}^N \xi_{k} \sum_{\ell \in \mathcal{S}_k} \mathbf{I}_{[0,a]} \left(\left\langle \ell x \right\rangle \right) \right|}{\sqrt{N \log \log N}} \leq 5 \lambda^{3/2} \sqrt{2^{-L}} \qquad \textup{almost surely}.
\end{equation}
Again by Fubini's theorem we can conclude that $\p$-almost surely the asymptotic result~\eqref{result} holds for almost all $x$. The same result holds with the intervals $[0,a]$ replaced by $[j 2^{-L},j2^{-L}+a]$ for some $j \in \{1, \dots, 2^L-1\}$. This proves the lemma.
\end{proof}

\section{Random sequences} \label{aux2}

As already mentioned in Section~\ref{secmodel}, we can use the random variables $\xi_1,\xi_2,\dots$ and the sets $\mathcal{S}_k,~k \geq 1,$ to define a (random) sequence $(m_k)_{k \geq 1}$ of positive integers in the following way: for $\omega \in \Omega$ we require that the sequence $(m_k)_{k \geq 1}=(m_k(\omega))_{k \geq 1}$ consists of all the numbers which are contained in the sets $\mathcal{S}_k$ for which $\xi_k=1$, sorted in increasing order.\\

Note that a typical realization of a sequence $(m_k)_{k \geq 1}$ does \emph{not} satisfy the assumptions of Theorem~\ref{th1} and~\ref{th2}, since by the Erd\H os--R\'enyi  ``pure heads'' theorem with probability one such a sequence will have gaps for $m_{k+1} - m_k$ of order roughly $\log k$, infinitely often (see for example~\cite{gsw, turi}). Thus we define a second sequence $(n_k)_{k \geq 1}=(n_k(\omega))_{k \geq 1}$ which for a given $\omega$ and corresponding random sequence $(m_k)_{k \geq 1}$ contains all the number
$$
2k-1, ~k \geq 1, \qquad \textrm{and} \qquad 2 m_k,~k \geq 1,
$$
sorted in increasing order. Thus independent of $\omega$ the sequence $(n_k)_{k \geq 1}$ always contains all odd numbers, which implies that $n_{k+1} - n_k \leq 2$ for all $\omega$.\\

For any $N \geq 1$ we define a random variable $K(N)$ by
$$
K(N) = \sum_{k=1}^N \xi_k.
$$
Then by the strong law of large numbers we have
\begin{equation} \label{approx1}
\lim_{N \to \infty} \frac{K(N)}{N} = p, \qquad \textrm{$\p$-almost surely.}
\end{equation}
Furthermore, by~\eqref{deltaapprox} we have
\begin{equation} \label{approx2}
\# \left\{ \{k:~m_k \leq N\} ~\Delta~ \left\{1, \dots, K(\lfloor N/\lambda \rfloor) \right\} \right\} = o (\log N)
\end{equation}
and
\begin{equation} \label{approx3}
\# \left\{ \{m_k:~m_k \leq N\} ~\Delta~ \left\{ \bigcup_{k \leq N/\lambda:~\xi_k = 1} \mathcal{S}_k \right\} \right\} = o (\log N)
\end{equation}
as $N \to \infty$. By~\eqref{approx3} for any function $f$ satisfying~\eqref{f} we have
\begin{equation} \label{flil}
\limsup_{N \to \infty} \frac{\left| \sum_{k:~m_k \leq N} f(m_k x) \right|}{\sqrt{N \log \log N}} = \limsup_{N \to \infty} \frac{\left| \sum_{k \leq N/\lambda} \xi_k \sum_{\ell \in \mathcal{S}_k} f(\ell x) \right|}{\sqrt{N \log \log N}}.
\end{equation}
For any given $N \geq 1$ we have
\begin{equation} \label{sets}
\{n_k:~n_k \leq N\} = \{2k-1:~ 2k-1 \leq N \} \cup \{2m_k:~2m_k \leq N\}.
\end{equation}
Thus for any trigonometric polynomial $g$ without constant term we have, $\p$-almost surely, that
\begin{eqnarray}
\limsup_{N \to \infty} \frac{\left| \sum_{k:~n_k \leq N} g(n_k x) \right|}{\sqrt{N \log \log N}} & = & \limsup_{N \to \infty} \frac{\left| \sum_{k=1}^{\lceil N/2 \rceil} g((2k-1) x) + \sum_{k:~2 m_k \leq N} g(2m_k x) \right|}{\sqrt{N \log \log N}} \nonumber \\
& = & \limsup_{N \to \infty} \frac{\left| \sum_{k:~2 m_k \leq N} g(2m_k x) \right|}{\sqrt{N \log \log N}} \label{flilest}\\
& = & \sqrt{p (1-p)} \sqrt{\lambda} \|g\| \qquad \textup{for almost all $x$,} \label{lilnk}
\end{eqnarray}
where we used~\eqref{flil} and Lemma~\ref{lemma1} to calculate~\eqref{flilest}, and Lemma~\ref{lemmadiscr2} and Lemma~\ref{koksinequ} to show that
$$
\limsup_{N \to \infty} \frac{\left| \sum_{k=1}^{\lceil N/2 \rceil} g((2k-1) x)\right|}{\sqrt{N \log \log N}} = 0 \qquad \textup{for almost all $x$.}
$$
By~\eqref{approx1},~\eqref{approx2} and~\eqref{sets} we have, $\p$-almost surely, that
$$
\# \{k:~n_k \leq N\} \sim \frac{N}{2} + K(\lfloor N/(2 \lambda)\rfloor) \sim N \underbrace{\left( \frac{1}{2} + \frac{p}{2\lambda} \right)}_{=(\lambda+p)/(2\lambda)} \qquad \textrm{as $N \to \infty$.}
$$
Consequently by~\eqref{lilnk} we have, $\p$-almost surely, that
\begin{eqnarray} 
\limsup_{N \to \infty} \frac{\left| \sum_{k=1}^N g(n_k x) \right|}{\sqrt{N \log \log N}} & = & \sqrt{p (1-p)} \sqrt{\lambda} \frac{\sqrt{2\lambda}}{\sqrt{\lambda+p}} \|g\| \nonumber\\
& = & \frac{\lambda \sqrt{2 p (1-p)}}{\sqrt{\lambda+p}} \|g\| \qquad  \textup{for almost all $x$.} \label{lilf1}
\end{eqnarray}

In a similar way we can modify Lemma~\ref{lemma2} and Lemma~\ref{lemma3}, and reformulate them in terms of $(n_k)_{k \geq 1}$. Instead of Lemma~\ref{lemma2} we get the following: for any function $h(x)$ satisfying~\eqref{f} we have, $\p$-almost surely, that
\begin{equation} \label{lilf2}
\limsup_{N \to \infty} \frac{\left| \sum_{k=1}^N h(n_k x) \right|}{\sqrt{N \log \log N}} \leq \frac{\lambda}{\sqrt{2}\sqrt{\lambda+p}} ~ \|h\| \qquad \textrm{for almost all $x$}. 
\end{equation}
Instead of Lemma~\ref{lemma3} we get the following: for any fixed $L \geq 1$ we have, $\p$-almost surely, that
\begin{equation} \label{lilf3}
\limsup_{N \to \infty} ~\max_{s = 0, \dots, 2^L-1} ~ \sup_{0 \leq a \leq 2^{-L}} \frac{\left| \sum_{k=1}^N \mathbf{I}_{[s2^{-L},s2^{-L}+a]} (n_k x) \right|}{\sqrt{N \log \log N}} \leq  \frac{5 \lambda^{3/2}}{\sqrt{\lambda+p}} \sqrt{2^{-L}}
\end{equation}
for almost all $x$.\\

Note that any function $f$ satisfying~\eqref{f} can be split into a sum $g+h$ of a trigonometric polynomial $g$ (without constant term) and a remainder function $h$, where $\|h\|$ can be made arbitrarily small. Combining~\eqref{lilf1} and~\eqref{lilf2} and letting $\|h\| \to 0$ we obtain the following lemma.

\begin{lemma} \label{lemma1b}
For any function $f(x)$ satisfying~\eqref{f} we have, $\p$-almost surely, that
\begin{equation*}
\limsup_{N \to \infty} \frac{\left| \sum_{k=1}^N f(n_k x) \right|}{\sqrt{N \log \log N}} = \frac{\lambda \sqrt{2 p (1-p)}}{\sqrt{\lambda+p}} \|f\|\qquad \textrm{for almost all $x$}. 
\end{equation*}
\end{lemma}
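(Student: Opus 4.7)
My strategy is to decompose $f = g + h$ with $g$ a trigonometric polynomial without constant term that approximates $f$ in $L^2$ norm and $h$ a small remainder, then apply the established LIL identity~\eqref{lilf1} to $g$, the LIL upper bound~\eqref{lilf2} to $h$, and sandwich the limsup for $f$ via the triangle inequality.

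First I would produce the approximants. Since $f$ is $1$-periodic with $\int_0^1 f = 0$, its Fourier series has no constant term; the Cesàro/Fejér means $g_m$ of its partial sums are therefore trigonometric polynomials without constant term with $\|f - g_m\| \to 0$. Setting $h_m := f - g_m$, each $h_m$ is $1$-periodic with mean zero and has bounded variation (as $f$ does and $g_m$ is $C^\infty$), so $h_m$ satisfies~\eqref{f}. Clearly $\|g_m\| \to \|f\|$ and $\|h_m\| \to 0$.

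Next I would invoke~\eqref{lilf1} for $g_m$ and~\eqref{lilf2} for $h_m$. Each of these is a $\p$-almost sure statement about the random sequence $(n_k)_{k \geq 1}$, and since a countable intersection of full-measure events still has full measure, $\p$-almost surely they hold \emph{simultaneously} for every $m \geq 1$ (and for almost every $x$). For such $(\omega,x)$, the triangle inequality applied to $f(n_k x) = g_m(n_k x) + h_m(n_k x)$ gives
\[
\left| \, \limsup_{N \to \infty} \frac{|\sum_{k=1}^N f(n_k x)|}{\sqrt{N \log \log N}} \; - \; \frac{\lambda \sqrt{2 p(1-p)}}{\sqrt{\lambda + p}} \, \|g_m\| \, \right| \;\leq\; \frac{\lambda}{\sqrt{2} \sqrt{\lambda + p}} \, \|h_m\|.
\]
Letting $m \to \infty$ and using $\|g_m\| \to \|f\|$, $\|h_m\| \to 0$ then yields the asserted equality.

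I do not expect a substantial obstacle here — the entire probabilistic content has already been absorbed into~\eqref{lilf1} and~\eqref{lilf2}. The only care needed is the exceptional-set bookkeeping: the countable intersection over $m$ must be taken \emph{before} sending $m \to \infty$, so that on a single $\p$-full-measure event the a.e.-in-$x$ bounds hold for all approximants simultaneously. The bounded variation of $h_m$, required in order to legitimately apply~\eqref{lilf2}, is automatic because Fejér kernels are smooth.
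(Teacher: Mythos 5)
Your proposal is correct and follows the paper's own argument: the paper likewise splits $f=g+h$ into a trigonometric polynomial without constant term and a remainder with $\|h\|$ arbitrarily small, combines~\eqref{lilf1} and~\eqref{lilf2}, and lets $\|h\|\to 0$. Your additional details (Fej\'er means for the approximants, the bounded variation of $h_m$, and taking the countable intersection over $m$ before passing to the limit) are exactly the right bookkeeping.
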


For the following calculations, we have to introduce the modified discrepancies $D_N^{(\geq 2^{-L})}$ and $D_N^{(\leq 2^{-L})}$, which only consider ``large'' and ``small'' intervals, respectively. More precisely, for any integer $L \geq 1$ and points $y_1, \dots, y_N$, we set
$$
D_N^{(\geq 2^{-L})} (y_1, \dots, y_N) = \max_{s = 0, \dots, 2^L-1}~ \sup_{0 \leq a \leq 2^{-L}} \left| \sum_{k=1}^N \mathbf{I}_{[s2^{-L},s2^{-L}+a]} (y_k) \right|
$$
and
$$
D_N^{(\leq 2^{-L})} (y_1, \dots, y_N) = \max_{0 < s < 2^L}  \left| \sum_{k=1}^N \mathbf{I}_{[0, s 2^{-L}]} (y_k) \right|.
$$
Note that any subinterval of $[0,1]$ which has one vertex at the origin can be written as the disjoint union of (at most) one interval of the form $[0, s 2^{-L}]$ for some appropriate $s$ and (at most) one interval of the form $[s2^{-L},s2^{-L}+a]$ for appropriate $s$ and $a$; consequently, for any points $y_1, \dots, y_N$ we always have
\begin{equation} \label{discrepancies}
D_N^{(\geq 2^{-L})} (y_1, \dots, y_N) \leq D_N^* (y_1, \dots, y_N) \leq D_N^{(\geq 2^{-L})} (y_1, \dots, y_N)  + D_N^{(\leq 2^{-L})} (y_1, \dots, y_N).
\end{equation}
By Lemma~\ref{lemma1b}, and since for the discrepancy $D_N^{(\geq 2^{-L})}$ only finitely many indicator functions are considered, we have, $\p$-almost surely, that
\begin{eqnarray}
\limsup_{N \to \infty} \frac{N D_N^{(\geq 2^{-L})} (\langle n_1 x \rangle, \dots, \langle n_N x \rangle)}{\sqrt{N \log \log N}} & = & \frac{\lambda \sqrt{2 p (1-p)}}{\sqrt{\lambda+p}} \underbrace{\max_{0 \leq s < 2^L} \|\mathbf{I}_{[0, s 2^{-L}]}\|}_{= 1/2} \nonumber\\
& = & \frac{\lambda \sqrt{p (1-p)}}{\sqrt{2} \sqrt{\lambda+p}} \qquad \textrm{for almost all $x$.} \label{discrl2}
\end{eqnarray}
Clearly $L$ can be made arbitrarily large. Thus by~\eqref{lilf3},~\eqref{discrepancies} and~\eqref{discrl2} we obtain the following lemma.

\begin{lemma} \label{lemma3b}
For $\p$-almost all sequences $(n_k)_{k \geq 1}$ we have
$$
\limsup_{N \to \infty} \frac{N D_N^*(\langle n_1 x \rangle, \dots, \langle n_N x \rangle)}{\sqrt{N \log \log N}}  = \frac{\lambda \sqrt{p (1-p)}}{\sqrt{2} \sqrt{\lambda+p}} \qquad \textrm{for almost all $x$.}
$$
\end{lemma}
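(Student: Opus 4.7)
The plan is that Lemma~\ref{lemma3b} is an immediate consequence of the three preparatory statements~\eqref{discrepancies},~\eqref{discrl2}, and~\eqref{lilf3}, combined with one routine quantifier exchange. First I would fix a $\p$-null set and a Lebesgue-null subset of $[0,1]$ outside which the conclusions of~\eqref{discrl2} and~\eqref{lilf3} hold simultaneously \emph{for every} $L \in \N$; this is achieved by taking the countable union, over $L \in \N$, of the exceptional sets corresponding to each individual $L$. On the complement of these null sets every asymptotic estimate below then holds for every integer $L \geq 1$.

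For the upper bound I would take the right-hand side of the sandwich inequality~\eqref{discrepancies}, divide by $\sqrt{N \log \log N}$, and pass to $\limsup_{N \to \infty}$. Using the subadditivity of limsup together with the exact coarse-scale asymptotics~\eqref{discrl2} and the fine-scale bound~\eqref{lilf3}, this gives
\begin{equation*}
\limsup_{N \to \infty} \frac{N D_N^*(\langle n_1 x \rangle, \dots, \langle n_N x \rangle)}{\sqrt{N \log \log N}} \;\leq\; \frac{\lambda \sqrt{p(1-p)}}{\sqrt{2}\sqrt{\lambda+p}} \;+\; \frac{5 \lambda^{3/2}}{\sqrt{\lambda+p}}\sqrt{2^{-L}}
\end{equation*}
simultaneously for every $L \geq 1$. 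Sending $L \to \infty$ annihilates the second summand and yields the upper bound.

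For the matching lower bound I would simply apply the left-hand side of~\eqref{discrepancies}, which bounds $N D_N^*$ from below by the coarse-scale discrepancy whose exact limsup is identified in~\eqref{discrl2} as $\lambda \sqrt{p(1-p)}/(\sqrt{2}\sqrt{\lambda+p})$; no limiting argument in $L$ is needed here, since the lower bound is already attained at any fixed $L \geq 1$ (in particular at $L = 1$, for which one of the allowed anchored intervals is $[0, 1/2]$, whose centered indicator achieves the maximal $L^2$-norm $1/2$).

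There is no genuine obstacle: all the substantive probabilistic and analytic work has already been carried out in Lemmas~\ref{lemma1}--\ref{lemma3} and in their translation~\eqref{lilf3},~\eqref{discrl2} to the deterministic sequence $(n_k)_{k \geq 1}$. The only mild subtlety is verifying that a single exceptional null set can be chosen uniformly in $L$, which is precisely what the countable-intersection step in the first paragraph accomplishes. After that, the proof collapses to subadditivity of $\limsup$ plus letting $L \to \infty$.
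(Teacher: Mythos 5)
Your argument is correct and is exactly the paper's proof of Lemma~\ref{lemma3b}: the paper likewise combines the sandwich~\eqref{discrepancies} with~\eqref{discrl2} and~\eqref{lilf3} and lets $L \to \infty$ (stating this in a single sentence), while you additionally spell out the routine countable intersection of null sets over $L$. Note only that the displayed definitions of $D_N^{(\geq 2^{-L})}$ and $D_N^{(\leq 2^{-L})}$ in the paper have their superscripts interchanged relative to their intended meanings; you correctly read them in the intended (coarse/fine) sense.
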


\section{Proof of the theorems} \label{secproofs}

Theorem~\ref{th1} follows from Lemma~\ref{lemma1b}. In fact, assume that a real number $\Lambda \geq 0$ is given. If $\Lambda=0$, then we may choose $n_k=k,~k \geq 1.$ By Lemma~\ref{lemmadiscr} this sequence satisfies the conclusion of Theorem~\ref{th1}. If $\Lambda>0$, then we choose a positive integer $\lambda$ and a real number $p \in (0,1)$ such that
$$
\Lambda = \frac{\lambda \sqrt{2 p (1-p)}}{\sqrt{\lambda+p}}.
$$
For these values of $\lambda$ and $p$, we can use the probabilistic construction from Section~\ref{secmodel} to construct a class of random sequences $(n_k)_{k \geq 1}$ as described in the previous sections, each of them satisfying the growth condition $n_{k+1}-n_k \in \{1,2\}$. By Lemma~\ref{lemma1b} for $\p$-almost all such sequences $(n_k)_{k \geq 1}$ we have, for any fixed function $f$ satisfying~\eqref{f}, that
\begin{equation} \label{lilf4}
\limsup_{N \to \infty} \frac{\left| \sum_{k=1}^N f(n_k x) \right|}{\sqrt{N \log \log N}} = \Lambda \|f\|\qquad \textrm{for almost all $x$}.
\end{equation}
This proves Theorem~\ref{th1}. Theorem~\ref{th2} can be deduced from Lemma~\ref{lemma3b} in a similar way.

\def\cprime{$'$}

\end{document}